\newtheorem{thm}{Theorem}[section]
\newtheorem{lem}[thm]{Lemma}
\newtheorem{cor}[thm]{Corollary}
\newtheorem{prop}[thm]{Proposition}
\newtheorem{rem}[thm]{Remark}
\theoremstyle{definition}
\def\M{\mathcal{M}_{g}}
\def\Sg{\Sigma_{g}}
\begin{document}

\title[Nielsen equivalence classes of mapping class groups]
{On Nielsen equivalence classes of two-elements generators of mapping class groups}

\author[S. Hirose]{Susumu Hirose}
\address{Department of Mathematics, Faculty of Science and Technology, Tokyo University of Science, Noda, Chiba 278-8510, Japan}
\email{hirose-susumu@rs.tus.ac.jp}

\author[N. Monden]{Naoyuki Monden}
\address{Department of Mathematics, Faculty of Science, Okayama University, Okayama 700-8530, Japan}
\email{n-monden@okayama-u.ac.jp}

\begin{abstract}
We show that there are infinitely many Nielsen equivalence classes of the mapping class group of a closed oriented surface of genus at least eight. 
\end{abstract}

\maketitle

\section{Introduction}
Let $\M$ be the mapping class group of a closed oriented surface $\Sg$ of genus $g$, i.e., the group of isotopy classes of orientation-preserving homeomorphisms of $\Sg$. 
Note that $\mathcal{M}_1$ is isomorphic to the special linear group $\mathrm{SL}(2,\mathbb{Z})$. 
In this paper, a pair $(x_1,x_2)$ of elements of a group $G$ such that $x_1,x_2$ generate $G$ is called a \textit{generating pair} of $G$. 

The problem of finding generating sets for $\M$ is a classical one, and has been studied by many authors. 
It was shown by Dehn \cite{De} that $\M$ is finitely generated, and his generating set consists of finitely many Dehn twists. 
After that, Lickorish \cite{Li} showed that $3g-1$ Dehn twists suffice to generate $\M$. 
Moreover, Humphries \cite{Hu} reduced the number of Dehn twists generating $\M$ to $2g+1$ and proved that the number is minimal to generate $\M$ if $g\geq 2$. 
If one is not limited to Dehn twists, smaller generating sets can be obtained. 
In fact, Wajnryb \cite{Wa} gave a generating set for $\M$ consisting of two elements for $g\geq 1$. 
Note that this generating set is minimal since $\M$ is not cyclic, that is, at least two elements are needed to generate $\M$.  
Since then, various generating pairs for $\M$ have been constructed (see, for example, \cite{Ko1, Yi2, BK, HM}). 
In particular, infinitely many distinct generating pairs were given in \cite{HM} for large $g$. 
For initiating our investigation on the set of generating pairs for $\M$, we consider a certain equivalence relation among them.

In general, there is a natural equivalence relation on generating sets for a group $G$, called \textit{Nielsen equivalence}.   
An \textit{elementary Nielsen transformation} on a $k$-tuple $\mathcal{X} = (x_1,x_2,\ldots,x_k)$ of elements of $G$ is one of the following three operations: 
\begin{enumerate}
\item[(1)] Replace $x_i$ by $x_i^{-1}$ for some $i \in \{1,2,\ldots,k\}$, 
\item[(2)] Swap $x_i$ and $x_j$ for some $i\neq j$ and $i,j \in \{1,2,\ldots,k\}$, and 
\item[(3)] Replace $x_i$ by $x_ix_j$ for some $i\neq j$ and $i,j \in \{1,2,\ldots,k\}$. 
\end{enumerate}
We say that two $k$-tuples $\mathcal{X}$ and $\mathcal{X}'$ are \textit{Nielsen equivalent} if one can be transformed into the other by a finite sequence of elementary Nielsen transformations. 
Examples of groups with infinitely many Nielsen equivalence classes can be found among certain knot groups (see \cite{Zie,HW}), one-relator groups (see \cite{Bru}), relatively free polynilpotent groups (see, for example, \cite{MN}), the Gupta-Sidki $p$-group for prime $p\geq 3$ (see \cite{Myr}) and others. 
In this paper, we investigate on how many Nielsen equivalence classes of generating pairs exist for the mapping class group $\M$. 
\begin{thm}\label{thm:1}
For each $g\geq 8$, there are infinitely many Nielsen equivalence classes on generating pairs of the mapping class group $\M$.  
\end{thm}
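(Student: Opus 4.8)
The plan is to construct infinitely many generating pairs and then find an invariant of Nielsen equivalence that distinguishes them. The standard strategy for separating Nielsen classes is to pass to a quotient where Nielsen equivalence becomes tractable, typically an abelian or metabelian quotient, or to use a homomorphism to a group where generating pairs are well understood. Since $\M$ has torsion abelianization (indeed $H_1(\M)$ is finite for $g\geq 3$, being trivial), the naive abelianization invariant is useless; so the first task is to locate a more refined target.

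The key idea I would pursue is to build a homomorphism $\rho\colon \M \to Q$ onto a group $Q$ for which the set of Nielsen equivalence classes of generating pairs carries a computable invariant — the natural candidate being a quotient related to $\mathrm{SL}(2,\mathbb{Z})$ or a congruence/Torelli-type quotient. Recall $\mathcal{M}_1\cong \mathrm{SL}(2,\mathbb{Z})$, and for $\mathrm{SL}(2,\mathbb{Z})$ (or more tractably $\mathrm{PSL}(2,\mathbb{Z})\cong \mathbb{Z}/2 * \mathbb{Z}/3$, a free product) the Nielsen classes of generating pairs can be separated using, for instance, the trace of the commutator $\mathrm{tr}([\rho(x_1),\rho(x_2)])$ or an analogous class function. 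Because elementary Nielsen transformations act on a generating pair through $\mathrm{Aut}(F_2)$, and the induced action on $\mathrm{SL}(2,\mathbb{C})$-character varieties is by the mapping class group of the once-punctured torus via the Fricke identities, any $\mathrm{Aut}(F_2)$-invariant function of the commutator trace descends to an invariant of the Nielsen class. I would therefore: (1) construct an explicit infinite family $(a_n, b_n)$ of generating pairs of $\M$, building on the infinite family of distinct generating pairs from \cite{HM}; (2) choose a surjection $\rho$ from $\M$ to a suitable group $Q$ (for $g\geq 8$ one has room to map onto symplectic or finite simple quotients); and (3) compute the chosen Nielsen invariant on $(\rho(a_n),\rho(b_n))$ and show it takes infinitely many values.

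Carrying this out, the forward steps are as follows. First I would fix the generating pairs from \cite{HM}, whose construction for large $g$ produces a pair whose second generator is modified by an increasing number of twists, giving a parameter $n$. Second, I would identify the invariant: the most robust choice is that $\mathrm{Aut}(F_2)$ preserves the commutator subgroup setwise up to conjugation and inversion, so the conjugacy class of the image of the commutator $[x_1,x_2]$, or a numerical function thereof, is a Nielsen invariant. I would evaluate $\rho([a_n,b_n])$ in $Q$ and argue the resulting conjugacy classes (or trace values) are pairwise distinct for infinitely many $n$. Third, I would verify that the hypothesis $g\geq 8$ is exactly what guarantees both that the family exists and that $\rho$ has the required image.

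The hard part will be step two, namely pinning down a quotient $Q$ and an invariant that is simultaneously (a) genuinely invariant under all three elementary Nielsen transformations — so that I must check the commutator-conjugacy-class (or trace) function is stable under inversion, swap, and the multiplication move, which requires the Fricke-type identities relating $\mathrm{tr}[x_1,x_2]$ under the $\mathrm{Aut}(F_2)$-action — and (b) computable explicitly on the family, so that I can actually show the infinitely-many-values claim rather than merely asserting it. The subtlety is that $\mathrm{Aut}(F_2)$ acting on $\mathrm{SL}(2)$-characters does change $\mathrm{tr}[x_1,x_2]$ in general; the resolution is to use the full orbit structure, so the correct invariant is the $\mathrm{Aut}(F_2)$-orbit of the character, and I expect the main technical content to be exhibiting a genuinely orbit-separating quantity and controlling it along the family.
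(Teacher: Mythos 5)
Your high-level strategy coincides with the paper's: the invariant is the conjugacy class of the commutator $[x_1,x_2]$ up to inversion (Nielsen's lemma, stated as Lemma~\ref{commutator}), detected through a linear representation --- the paper uses the action on $H_1(\Sigma_g;\mathbb{R})$, i.e.\ essentially the symplectic quotient you mention as a candidate for $Q$. However, as written the proposal has two genuine gaps, and they are precisely where all the work lies. First, you never produce a concrete infinite family of generating pairs. Deferring to \cite{HM} does not suffice: you must (a) prove the pairs generate $\M$ and (b) choose them so that the homology action of their commutators is actually computable and visibly varies. The paper takes $h_0=t_{b_1}t_{b_2}t_{a_3}t_{c_5}$ and $\rho_n=rt_{a_1}^n$, and the proof that these generate (Theorem~\ref{thm:2}, with Lemma~\ref{lem:7}) is a long, delicate Dehn-twist computation using the lantern relation; this is the bulk of the paper and nothing in your outline substitutes for it. Second, you never evaluate the invariant. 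The paper computes $[h_0,\rho_n]=t_{b_1}t_{a_4}^{-1}t_{c_5}t_{c_6}^{-1}\cdot t_{b_2}t_{t_{a_2}^n(b_2)}^{-1}\cdot t_{a_3}t_{b_3}^{-1}$ explicitly, observes that its homology action preserves a direct-sum decomposition, and extracts a $2$-dimensional block with eigenvalues $\tfrac{1}{2}\bigl(m^2+2\pm\sqrt{(m^2+2)^2+4}\bigr)$, which are strictly increasing in $m$; "argue the resulting conjugacy classes are pairwise distinct" is exactly the step that cannot be waved at, since for a generic family the commutator's spectrum need not separate the parameters.

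A smaller point: your concern that $\mathrm{tr}[x_1,x_2]$ is not preserved by the $\mathrm{Aut}(F_2)$-action on characters, and that one must therefore work with full orbits via Fricke identities, is a red herring. Once Lemma~\ref{commutator} is in hand, the commutator of a Nielsen-equivalent pair is a conjugate of $[x_1,x_2]^{\pm1}$, so \emph{any} class function invariant under inversion --- such as the unordered spectrum of the homology action together with its reciprocals --- is automatically a Nielsen invariant; no character-variety machinery is needed. The hypothesis $g\geq 8$ is also not about the existence of the family or the image of $\rho$ as you suggest, but is used concretely in the generation argument (e.g.\ to ensure $c_6$ is disjoint from $b_g$).
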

Since there is a surjective homomorphism from $\M$ onto the integral symplectic group $\mathrm{Sp}(2g, \mathbb{Z})$, we obtain the following consequence. 
\begin{cor}\label{cor:1}
For each $g\geq 8$, there are infinitely many Nielsen equivalence classes on generating pairs of $\mathrm{Sp}(2g,\mathbb{Z})$.  
\end{cor}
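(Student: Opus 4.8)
The plan is to prove Theorem~\ref{thm:1} first and then deduce Corollary~\ref{cor:1} by pushing everything forward along the standard surjection $\M \twoheadrightarrow \mathrm{Sp}(2g,\mathbb{Z})$ induced by the action on $H_1(\Sg;\mathbb{Z})$. The key observation is that Nielsen equivalence behaves well under surjective homomorphisms in one direction: if $\phi\colon G \twoheadrightarrow Q$ is a surjection and $(x_1,x_2)$ is a generating pair of $G$, then $(\phi(x_1),\phi(x_2))$ is a generating pair of $Q$, and any elementary Nielsen transformation on $(x_1,x_2)$ maps under $\phi$ to the corresponding elementary Nielsen transformation on $(\phi(x_1),\phi(x_2))$. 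Hence $\phi$ sends Nielsen-equivalent pairs to Nielsen-equivalent pairs, so it descends to a well-defined map on the sets of Nielsen equivalence classes of generating pairs.

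The main step is therefore to verify that this induced map has infinite image. Theorem~\ref{thm:1} furnishes infinitely many Nielsen equivalence classes of generating pairs $(x_1^{(n)}, x_2^{(n)})$ of $\M$; what I would do is exhibit a family among them whose images under $\phi$ remain pairwise Nielsen inequivalent in $\mathrm{Sp}(2g,\mathbb{Z})$. The cleanest way to arrange this is to use whatever invariant distinguishes the classes in the proof of Theorem~\ref{thm:1}: if that invariant is itself defined through the symplectic representation (for instance, through the action on homology or through a quantity that factors through $\mathrm{Sp}(2g,\mathbb{Z})$), then the distinction survives verbatim in the quotient and the corollary is immediate. First I would inspect the construction behind Theorem~\ref{thm:1} and isolate the separating invariant; then I would check that it only depends on the symplectic images $\phi(x_1^{(n)})$ and $\phi(x_2^{(n)})$.

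The one genuine obstacle is that the distinguishing invariant used to separate the classes in $\M$ might live above the symplectic quotient, i.e.\ it could depend on information in the Torelli group $\ker\phi$ that is lost under $\phi$. In that case, mere surjectivity of $\phi$ does not suffice, since distinct Nielsen classes upstairs can collapse downstairs. If this occurs, I would instead need a separating invariant that is intrinsic to $\mathrm{Sp}(2g,\mathbb{Z})$ and is still nontrivial on the images of the $(x_1^{(n)},x_2^{(n)})$. A natural candidate is to pass to a further finite quotient $\mathrm{Sp}(2g,\mathbb{Z}/m)$ or to use an invariant built from the eigenvalue data or the conjugacy-class data of the matrices $\phi(x_i^{(n)})$, quantities that are manifestly defined purely in $\mathrm{Sp}(2g,\mathbb{Z})$. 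I expect, however, that the construction in Theorem~\ref{thm:1} is already homological in nature, so that the invariant factors through $\phi$ and the corollary follows directly from the well-definedness of the induced map on Nielsen classes together with the infinitude established upstairs.
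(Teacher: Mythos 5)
Your proposal is correct and matches the paper's (implicit) argument: the paper deduces the corollary from Theorem~\ref{thm:1} via the surjection $\M \to \mathrm{Sp}(2g,\mathbb{Z})$, and the separating invariant in Proposition~\ref{prop:3} is exactly what you anticipate in your optimistic branch, namely the eigenvalues of the commutator $[h_0,\rho_n]$ acting on $H_1(\Sigma_g;\mathbb{R})$, which manifestly factors through the symplectic representation, so combined with Lemma~\ref{commutator} the images of the pairs remain pairwise Nielsen inequivalent downstairs. Your caution that surjectivity alone does not suffice is well taken --- the paper states the corollary with only a one-line justification --- and your identification of the needed extra step (that the invariant descends) is precisely the right completion of that argument.
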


In contrast to Theorem~\ref{thm:1} and Corollary~\ref{cor:1}, the following holds for $g=1$. 
\begin{prop}\label{prop:100}
There are only finitely many Nielsen equivalence classes on generating pairs of the mapping class group $\mathcal{M}_1(\cong \mathrm{SL}(2,\mathbb{Z}))$. 
\end{prop}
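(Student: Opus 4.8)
The plan is to prove Proposition~\ref{prop:100} by exploiting the well-understood algebraic structure of $\mathcal{M}_1 \cong \mathrm{SL}(2,\mathbb{Z})$. Recall that $\mathrm{SL}(2,\mathbb{Z})$ admits the free product decomposition $\mathrm{SL}(2,\mathbb{Z}) \cong \mathbb{Z}/4 \ast_{\mathbb{Z}/2} \mathbb{Z}/6$, or equivalently that $\mathrm{PSL}(2,\mathbb{Z}) \cong \mathbb{Z}/2 \ast \mathbb{Z}/3$ is a free product of finite cyclic groups. The key idea is that Nielsen equivalence classes of generating pairs are controlled by this decomposition, so the finiteness should follow from a structural theorem rather than from any surface-theoretic input. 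I would first reduce the problem to $\mathrm{SL}(2,\mathbb{Z})$ via the isomorphism already noted in the introduction, and then work entirely inside this group.

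The main tool I would invoke is the classification of Nielsen equivalence classes of generating pairs of free products of cyclic groups. For groups that are free products, there is a Kurosh-type / Nielsen-realization theory: a generating pair of a free product of finite groups can, after Nielsen transformations, be carried into a standard form relative to the factors. More concretely, I would try to show that every generating pair of $\mathrm{SL}(2,\mathbb{Z})$ is Nielsen equivalent to one of a \emph{finite} list of normal forms built from representatives of the (finitely many) conjugacy classes of the torsion generators of the two finite factors. The heuristic is that, since $\mathbb{Z}/4$ and $\mathbb{Z}/6$ are finite, there are only finitely many possibilities for the "local" data of a generating pair at each factor, and Nielsen moves suffice to eliminate all the free-product "commutator" or "length" information that could otherwise produce infinitely many classes.

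Concretely, the steps I would carry out are: (1) pass from $\mathcal{M}_1$ to $\mathrm{SL}(2,\mathbb{Z})$ and fix the free-product-with-amalgamation structure; (2) recall or cite the relevant result on Nielsen equivalence in free products of finite cyclic groups (for instance the work classifying $T$-systems or Nielsen classes for such groups), which guarantees finitely many classes of generating pairs; (3) handle the amalgamation by the central $\mathbb{Z}/2 = \{\pm I\}$, checking that passing to $\mathrm{PSL}(2,\mathbb{Z})$ and lifting back introduces only finitely many additional classes, since the kernel is finite and central; and (4) conclude that the total count is finite. An alternative, more self-contained route would be to use the action of $\mathrm{SL}(2,\mathbb{Z})$ on its Bass--Serre tree (or on the hyperbolic plane / Farey graph) and to argue that any generating pair can be shortened, via Nielsen moves, until both elements have bounded translation length or are elliptic, again leaving only finitely many configurations.

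The hard part, I expect, will be step (3): the passage between $\mathrm{SL}(2,\mathbb{Z})$ and $\mathrm{PSL}(2,\mathbb{Z})$. Nielsen equivalence classes do not always descend cleanly along central extensions, and one must verify that the central element $-I$ (which equals a specific word in the torsion generators) does not spawn infinitely many distinct lifts of a given projective class. I would handle this by a careful bookkeeping argument: each projective generating pair has only finitely many lifts up to the sign ambiguity in each coordinate, and these sign changes are themselves realizable (or at least absorbable) by Nielsen transformations of type~(1) and~(3). Establishing precisely which sign patterns are Nielsen-equivalent is the delicate calculation, but once the finiteness of projective classes is in hand, the finiteness of the total count follows because a finite cover of a finite set is finite.
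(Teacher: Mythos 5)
Your proposal is correct and is essentially the paper's own argument: the paper's main proof uses the decomposition $\mathrm{SL}(2,\mathbb{Z}) \cong \mathbb{Z}_4 \ast_{\mathbb{Z}_2} \mathbb{Z}_6$ with central (hence normal) amalgamated subgroup $\{\pm I\}$ together with a weak McCool--Pietrowski lemma (Lemma~\ref{lem:1000}, itself proved by passing to the quotient $\mathbb{Z}_2 \ast \mathbb{Z}_3$ and applying Grushko's theorem) to place every generating pair, up to Nielsen equivalence, inside the finite set $\mathbb{Z}_4 \times \mathbb{Z}_6$. Your step (3) --- deducing finiteness in $\mathrm{SL}(2,\mathbb{Z})$ from finiteness in $\mathrm{PSL}(2,\mathbb{Z})$ by noting that each projective Nielsen class has at most four sign-lifts --- is precisely the alternative proof recorded in the paper's Remark following the proposition, and your fallback ``a finite cover of a finite set is finite'' is the right way to close it, since the sign changes need not be realizable by Nielsen moves but this is irrelevant for finiteness.
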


We present a more natural equivalence relation on generating sets for a group $G$, called \textit{T-equivalence}. 
Let $F_k$ be a free group of rank $k$ generated by $f_1,f_2,\ldots,f_k$. 
For a $k$-tuple $\mathcal{X} = (x_1,x_2,\ldots,x_k)$ of elements of $G$ such that $x_1,x_2,\ldots,x_k$ generate $G$, we define an surjective homomorphism $q_\mathcal{X} : F_k \to G$ to be $q_\mathcal{X}(f_i) = x_i$ for $i=1,2,\ldots,k$. 
We say that two $k$-tuples $\mathcal{X}$ and $\mathcal{X}'$ are \textit{T-equivalent} if there are automorphisms $\Phi:F_k \to F_k$ and $\phi:G \to G$ such that $\phi \circ q_{\mathcal{X}} = q_{\mathcal{X}'} \circ \Phi$. 
Since elementary Nielsen transformations on $(f_1,f_2,\ldots,f_k)$ generate the automorphism group $Aut(F_k)$ of $F_k$ by the result of Nilesen \cite{Ni}, 
T-equivalent $\mathcal{X}$, $\mathcal{X}'$ are Nielsen equivalent if $\phi = \mathrm{id}_G$. 
Makoto Sakuma kindly pointed out to us that our generating pairs given in this paper are not pairwise T-equivalent. 
That is, the following holds. 
\begin{thm}\label{thm:1000}
For each $g\geq 8$, there are infinitely many T-equivalence classes on generating pairs of the mapping class group $\M$.  
\end{thm}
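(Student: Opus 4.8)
The plan is to prove \emph{more} than what is stated: that the very family of generating pairs $\mathcal{X}_1,\mathcal{X}_2,\dots$ produced in the proof of Theorem~\ref{thm:1} is already pairwise non-T-equivalent. Unwinding the definition, $\mathcal{X}\sim_T\mathcal{X}'$ holds if and only if $q_{\phi(\mathcal X)}=q_{\mathcal X'}\circ\Phi$ for some $\Phi\in\mathrm{Aut}(F_2)$ and $\phi\in\mathrm{Aut}(\M)$, that is, if and only if $\phi(\mathcal X)=(\phi(x_1),\phi(x_2))$ is \emph{Nielsen} equivalent to $\mathcal X'$ for some automorphism $\phi$ of $\M$. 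Thus T-equivalence is exactly ``Nielsen equivalence after applying an automorphism of $\M$,'' and the whole problem reduces to understanding $\mathrm{Aut}(\M)$ together with the way automorphisms move the invariant that already separates the $\mathcal X_n$ into distinct Nielsen classes.

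First I would invoke the computation of the automorphism group of the mapping class group: by Ivanov's theorem, for $g\ge 3$ the natural map $\EM\to\mathrm{Aut}(\M)$ is an isomorphism, so $\mathrm{Out}(\M)\cong\mathbb{Z}/2$, generated by the class of (conjugation by) an orientation-reversing mapping class $\iota$. Consequently every $\phi\in\mathrm{Aut}(\M)$ is either inner or of the form (inner)$\,\circ\,\iota_*$.

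Next I would use that the invariant $I$ separating the pairs in Theorem~\ref{thm:1} is homological, taking values in $H_2(\M;\mathbb Z)\cong\mathbb Z$, and is natural: $I(\phi(\mathcal X))=\phi_*(I(\mathcal X))$ for $\phi\in\mathrm{Aut}(\M)$, where $\phi_*$ is the induced map on $H_2(\M;\mathbb Z)$. Inner automorphisms act trivially on group homology, so $I$ is unchanged by them; and since $\iota_*$ is an involution of $H_2(\M;\mathbb Z)\cong\mathbb Z$, necessarily $\iota_*=\pm\mathrm{id}$. Combining with the previous step, if $\mathcal X_m\sim_T\mathcal X_n$ then $I(\mathcal X_m)=\pm I(\mathcal X_n)$.

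To finish, recall that the $\mathcal X_n$ are built so that $I(\mathcal X_n)$ takes infinitely many distinct integer values; since each absolute value is attained by at most two integers, I may pass to an infinite subfamily on which the values $I(\mathcal X_n)$ have pairwise distinct absolute values. On that subfamily $I(\mathcal X_m)=\pm I(\mathcal X_n)$ forces $m=n$, so the pairs are pairwise non-T-equivalent, giving Theorem~\ref{thm:1000}. The main obstacle is the naturality, and in particular the conjugation-invariance, of $I$: T-equivalence already subsumes simultaneous conjugation of a generating pair, which a generic Nielsen invariant need \emph{not} respect, so the crux — and the content of Sakuma's observation — is to verify that the specific invariant of Theorem~\ref{thm:1} is invariant under all inner automorphisms of $\M$. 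Once that is in hand, together with the soft remark $\iota_*=\pm\mathrm{id}$, the outer $\mathbb{Z}/2$ costs us only a harmless passage to a subfamily.
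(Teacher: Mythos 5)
Your overall architecture is the same as the paper's: unwind T-equivalence to ``Nielsen equivalence after applying an automorphism of $\M$,'' invoke the Ivanov--McCarthy computation $\mathrm{Aut}(\M)\cong\EM$ for $g\ge 3$ so that every automorphism is conjugation by a possibly orientation-reversing mapping class, and then check that the invariant separating the Nielsen classes of Theorem~\ref{thm:1} survives such conjugations. That skeleton is exactly what the paper does (citing McCarthy \cite{Mc}).

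However, there is a genuine gap in the middle, and it is precisely at the point you yourself flag as ``the crux.'' The invariant that separates the pairs $(h_0,\rho_n)$ in Theorem~\ref{thm:1} is \emph{not} a class in $H_2(\M;\mathbb{Z})$; no such homological invariant appears in the paper, and you neither construct one nor verify that it separates the pairs or is natural under automorphisms. The actual invariant is the one supplied by Lemma~\ref{commutator}: the conjugacy class of the commutator $[x_1,x_2]$ up to inversion, which Proposition~\ref{prop:3} distinguishes via the set of eigenvalues of the action of $[h_0,\rho_n]$ on $H_1(\Sigma_g;\mathbb{R})$. Once the invariant is identified correctly, the verification you defer becomes immediate and your remaining machinery is unnecessary: invariance under inner automorphisms is automatic because the invariant is a conjugacy class by construction, and invariance under conjugation by an orientation-reversing class $\rho$ holds because $[\phi(h_0),\phi(\rho_n)]=\rho[h_0,\rho_n]\rho^{-1}$ has the same eigenvalue set on $H_1(\Sigma_g;\mathbb{R})$ as $[h_0,\rho_n]$. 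In particular, the passage to a subfamily with distinct absolute values is not needed, since Proposition~\ref{prop:3} already shows $[h_0,\rho_m]$ is not conjugate to $[h_0,\rho_n]^{\pm 1}$ for $m\ne n$, $m,n>0$. So your reduction to $\mathrm{Aut}(\M)$ is right, but the proof is incomplete as written because the separating invariant is misidentified and the key invariance claim is asserted for the wrong object.
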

The outline of the paper is as follows. 
In Section~\ref{Pre}, we present some results on mapping class groups and Nielsen transformations. 
The proofs of Theorems~\ref{thm:1} and~\ref{thm:1000} and Proposition~\ref{prop:100} are given in Section~\ref{proof}.

\vspace{0.1in}
\noindent \textit{Acknowledgements.} 
The first author was supported by JSPS KAKENHI Grant Numbers JP24K06746. 
The second author was supported by JSPS KAKENHI Grant Numbers JP25K07003. 
The authors would like to thank Marco Linton and Makoto Sakuma for inspiring us to think about this problem. 
They wish to express their gratitude to Makoto Sakuma for his comments on an earlier version of this paper.

\section{Preliminaries}\label{Pre}
This section gives some facts about mapping class groups and Nielsen transformations. 
More details of mapping class groups can be found in \cite{FM}. 
Let us denote by $\M$ the mapping class group of the closed oriented surface $\Sg$ of genus $g$. 
Throughout this paper, we will use the same symbol for a diffeomorphism and its isotopy class, or a simple closed curve and its isotopy class. 
The Dehn twist about a simple closed curve $c$ on $\Sg$ is denoted by $t_c$. 
For $f_1$ and $f_2$ in $\M$, the notation $f_2f_1$ means that we first apply $f_1$ and then $f_2$.

In this paper, we will use the following three relations in $\M$ repeatedly. 
Let $a,b,c,d$ be simple closed curves on $\Sg$. 
\begin{itemize}
\item $t_{f(a)} = f t_a f^{-1}$ for any $f \in \M$.

\item Suppose that $a$ and $b$ are disjoint from each other. 
Then, $t_a t_b = t_b t_a$, $t_b(a) = a$ and $t_a(b) = b$.

\item Suppose that $a$ intersects $b$ transversely at exactly one point. 
Then, $t_at_bt_a = t_bt_at_b$, $t_bt_a(b) = a$ and $t_b^{-1}t_a^{-1}(b) = a$.

\item The \textit{lantern relation}: $t_d t_c t_b t_a = t_z t_y t_x$, where $x,y,z$ are the interior curves on a subsurface of genus $0$ with four boundary curves $a,b,c,d$ on $\Sg$ as in Figure~\ref{lanterncurves}. 
\end{itemize}
\begin{figure}[hbt]
  \centering
       \includegraphics[scale=.07]{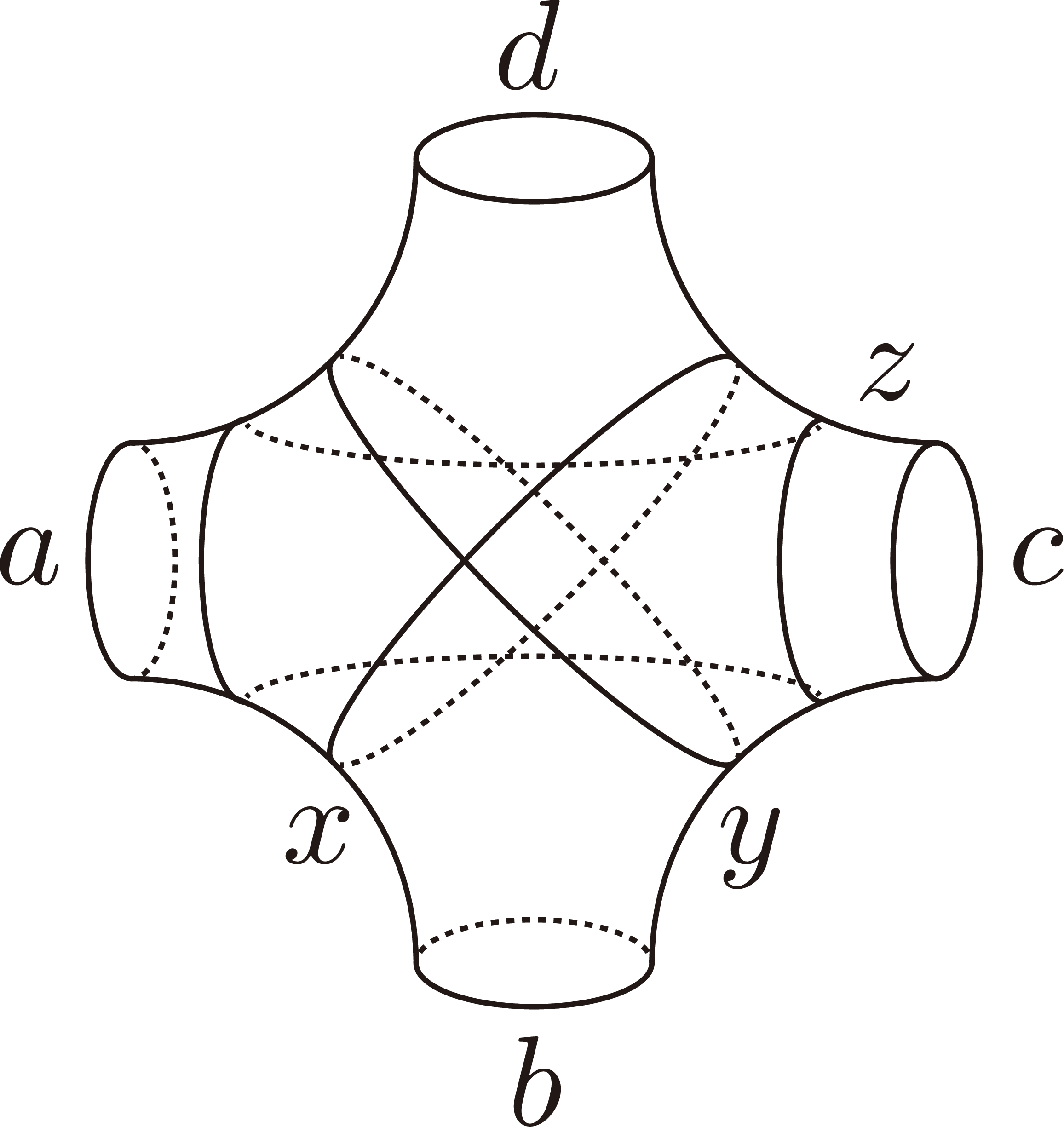}
       \caption{Simple closed curves $a,b,c,d,x,y,z$.} 
       \label{lanterncurves}
  \end{figure}
The lantern relation was discovered by Dehn \cite{De} and rediscovered by Johnson \cite{Jo1}.

We assume that the surface of this paper is the $yz$-plane and that $\Sigma_g$ is embedded in $\mathbb{R}^3$ as in Figure~\ref{rotation} such that it is invariant under the rotation $r$ by $-\frac{2\pi}{g}$ about the $x$-axis. 
The notations $a_i,b_i,c_i$ will always denote the simple closed curves on $\Sigma_g$ shown in Figure~\ref{rotation} for $i=1,2,\ldots,g$. 
\begin{figure}[hbt]
  \centering
       \includegraphics[scale=.17]{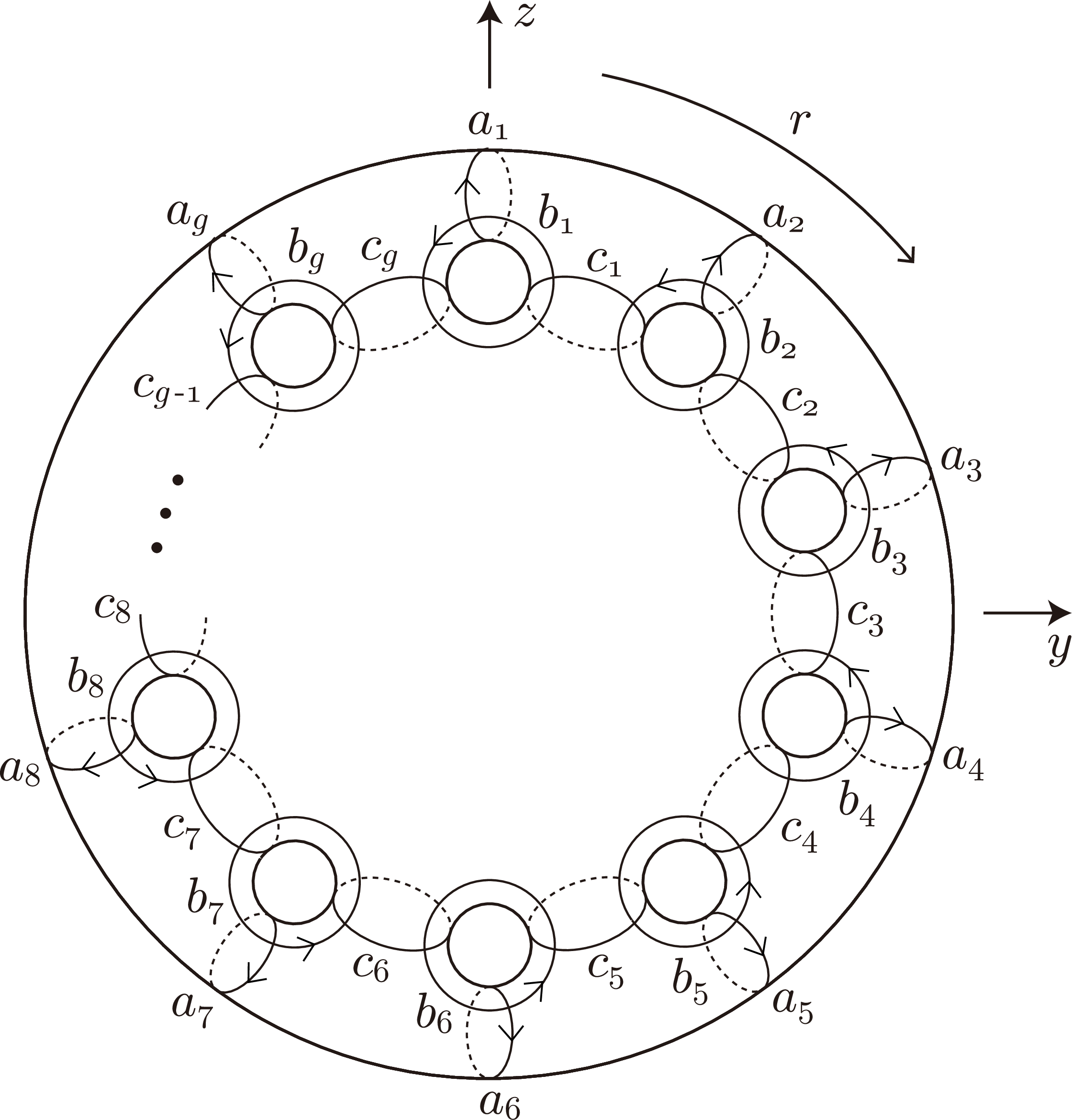}
       \caption{The rotation $r : \Sigma_g \to \Sigma_g$ by $-\frac{2\pi}{g}$ about the $x$-axis and the simple closed curves $a_i,b_i,c_i$ on $\Sigma_g$ for $i=1,2,\ldots,g$.}
       \label{rotation}
  \end{figure}

\begin{thm}[\cite{Li}]\label{Lickorishthm}
For $g\geq 1$, $\M$ is generated by $t_{a_1}, t_{a_2},\ldots,t_{a_g}, t_{b_1},t_{b_2},\ldots,t_{b_g}$ and $t_{c_1},t_{c_2},\ldots,t_{c_{g-1}}$. 
\end{thm}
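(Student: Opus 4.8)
The plan is to let $H\le\M$ be the subgroup generated by the $3g-1$ Dehn twists in the statement and to prove $H=\M$ by showing that $H$ contains $t_\gamma$ for \emph{every} nonseparating simple closed curve $\gamma$ on $\Sg$. This suffices because of the result of Dehn quoted in the introduction: $\M$ is generated by finitely many Dehn twists, and it is classical (using the transitivity of $\M$ on nonseparating curves, together with the lantern relation of Section~\ref{Pre} to rewrite the occasional separating twist) that one may take these to be twists about nonseparating curves. The engine of the argument is the conjugation relation $t_{f(\gamma)}=f\,t_\gamma\,f^{-1}$ listed in Section~\ref{Pre}: if some $h\in H$ carries a nonseparating curve $\gamma$ to the fixed curve $a_1$, then $t_{a_1}=h\,t_\gamma\,h^{-1}$, whence $t_\gamma=h^{-1}t_{a_1}h\in H$ since $t_{a_1}\in H$. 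Thus the whole theorem collapses to the purely topological assertion that $H$ \emph{acts transitively on isotopy classes of nonseparating simple closed curves}.

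Next I would establish transitivity on the standard configuration. Among the curves $a_i,b_i,c_i$, certain pairs meet transversely in a single point while the rest are disjoint, arranged (as in the figure) so that the adjacency graph with one edge per ``meet-once'' pair is connected, forming a chain running through all $3g-1$ curves. For two adjacent curves $x,y$ meeting once, the relation $t_xt_y(x)=y$ from Section~\ref{Pre} shows that $t_xt_y\in H$ carries $x$ to $y$; composing such elementary moves along the chain produces, for any standard curve $s$, an element of $H$ sending $s$ to $a_1$. Hence $H$ already acts transitively on the curves of the standard configuration.

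The crux is to reduce an arbitrary nonseparating curve to a standard one. Given nonseparating $\gamma$, put it in minimal position with respect to the reference curves $b_1,\dots,b_g$ and induct on $N(\gamma)=\sum_i i(\gamma,b_i)$. In the base case $N(\gamma)=0$ the curve $\gamma$ is disjoint from all the $b_i$; cutting along a maximal disjoint subfamily and invoking the change-of-coordinates principle (nonseparating curves with homeomorphic complements are equivalent) identifies $\gamma$ with a standard curve after applying an element of $H$. When $N(\gamma)>0$ one locates two points of $\gamma\cap b_i$ adjacent along some $b_i$ and cobounding a disk with a subarc of $\gamma$, and exhibits a generator $t_{b_i}^{\pm1}$ or $t_{c_i}^{\pm1}$ (or a short product of generators supported near that arc) whose action strictly lowers $N$; iterating drives $N$ to $0$ and returns us to the base case.

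I expect this last step to be the main obstacle. The delicate points are threefold: (i) at each stage one must choose a move built from the specific generators of $H$, rather than an arbitrary twist, so that the reduction never leaves the subgroup $H$; (ii) one must verify that the chosen twist strictly decreases the \emph{geometric} intersection number, which requires controlling bigons and re-establishing minimal position after each move; and (iii) the base case must be handled carefully, the nonseparating hypothesis being exactly what prevents $\gamma$ from falling outside the reach of the standard configuration. By contrast, the conjugation reduction of the first paragraph, the appeal to Dehn's theorem, and the chain-relation bookkeeping of the second paragraph are routine once this geometric reduction is secured.
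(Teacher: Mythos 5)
The paper does not prove this statement at all: it is imported verbatim from Lickorish's 1964 paper \cite{Li}, so there is no internal argument to compare yours against. Your outline does reproduce the classical architecture of Lickorish's proof (essentially the one in \cite{FM}): reduce to showing that the subgroup $H$ generated by the $3g-1$ twists contains $t_\gamma$ for every nonseparating $\gamma$; use the relation $t_{f(\gamma)}=ft_\gamma f^{-1}$ to convert this into transitivity of $H$ on nonseparating curves; and attack transitivity by twisting to decrease geometric intersection with the reference configuration. The preliminary reductions are sound: Dehn's finite generation by twists, the replacement of separating twists by nonseparating ones via the lantern (or chain) relation, and the observation that the adjacency graph $a_1-b_1-c_1-b_2-\cdots$ is connected, so that $H$ acts transitively on the standard curves via elements of the form $t_xt_y$.

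The problem is that the two load-bearing steps are not carried out, and one of them is circular as written. In your base case $N(\gamma)=0$ you ``invoke the change-of-coordinates principle'' to identify $\gamma$ with a standard curve ``after applying an element of $H$.'' Change of coordinates produces an element of $\M$, not of $H$; manufacturing an element of $H$ with this effect is precisely what the theorem asserts, so this step begs the question. A nonseparating curve disjoint from all the $b_i$ still sits inside the cut-open surface (a $2g$-holed sphere) in many non-standard ways, and normalizing it to $a_1$ requires a further explicit reduction using the $a_i$- and $c_i$-twists. Similarly, the inductive step --- exhibiting, for each local configuration of $\gamma$ against the reference curves, a specific product of the given generators that strictly decreases $N(\gamma)$ without creating bigons --- is the entire technical content of Lickorish's paper, and you only assert that it can be done while correctly flagging it as the main obstacle. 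As a roadmap the proposal is accurate; as a proof it is missing its engine, and the citation to \cite{Li} cannot honestly be replaced by it.
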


\begin{lem}[\cite{Ni}]\label{commutator}
Let $(x_1,x_2)$ and $(y_1,y_2)$ be two Nielsen equivalent generating pairs of a group $G$. 
Then the commutator $[x_1,x_2] = x_1x_2x_1^{-1}x_2^{-1}$ is conjugate either to $[y_1,y_2]$ or to $[y_1,y_2]^{-1}$. 
\end{lem}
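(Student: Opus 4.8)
The plan is to reduce the statement to a one-step verification and then argue by induction. Since $(x_1,x_2)$ and $(y_1,y_2)$ are Nielsen equivalent, by definition there is a finite sequence of elementary Nielsen transformations carrying the first pair to the second. Thus it suffices to prove the following local claim: if a pair $(u_1,u_2)$ is obtained from a pair $(v_1,v_2)$ by a single elementary transformation of type (1), (2), or (3), then $[u_1,u_2]$ is conjugate to $[v_1,v_2]$ or to $[v_1,v_2]^{-1}$. Induction on the length of the sequence then yields the lemma, provided the relation we are propagating is transitive.

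To that end I would first record that the binary relation ``$g$ is conjugate to $h$ or to $h^{-1}$'', which I write $g\sim h$, is an equivalence relation on $G$. Reflexivity and symmetry are immediate, and transitivity follows from the identity $(phq^{-1})^{-1}=ph^{-1}q^{-1}$, i.e.\ from the fact that inversion commutes with conjugation: if $g$ is conjugate to $h^{\pm 1}$ and $h$ is conjugate to $k^{\pm 1}$, substituting and inverting shows $g$ is conjugate to some power $k^{\pm 1}$. With $\sim$ in hand, the local claim says exactly that each elementary transformation sends $[v_1,v_2]$ to a commutator lying in its own $\sim$-class, and transitivity lets these single-step equivalences chain along the whole sequence.

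Next I would carry out the case-by-case computation with $c:=[v_1,v_2]=v_1v_2v_1^{-1}v_2^{-1}$, treating precisely the operations in the paper's definition as they act on a pair. For the inversions in (1): $[v_1^{-1},v_2]=v_1^{-1}\bigl(v_2v_1v_2^{-1}v_1^{-1}\bigr)v_1=v_1^{-1}c^{-1}v_1$, and symmetrically $[v_1,v_2^{-1}]=v_2^{-1}c^{-1}v_2$, both conjugate to $c^{-1}$. For the swap in (2): $[v_2,v_1]=c^{-1}$ on the nose. For the multiplications in (3): a short cancellation gives $[v_1v_2,v_2]=c$ and likewise $[v_1,v_2v_1]=c$, so the commutator is literally unchanged. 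Since operations (1)--(3) on a two-element tuple amount to exactly these five possibilities, this exhausts all cases, and in every case the new commutator is $\sim$-equivalent to $c$.

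The computations are routine cancellations, so I expect no serious obstacle; the two points that genuinely need care are confirming that $\sim$ is transitive—so that invariance under single steps actually chains up across an arbitrarily long sequence—and checking that the five displayed computations really account for every way operations (1)--(3) can be applied to a generating \emph{pair}, rather than silently assuming a preferred multiplication like $v_1\mapsto v_1v_2$. Assembling the equivalence-relation fact with the five computations completes the induction and hence the proof.
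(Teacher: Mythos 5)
Your proposal is correct and follows essentially the same route as the paper: both reduce to the five possible single elementary transformations on a pair and verify by direct cancellation that each one replaces $[v_1,v_2]$ by a conjugate of itself or of its inverse. The only addition is your explicit observation that the relation ``conjugate to $h$ or $h^{-1}$'' is transitive, which the paper leaves implicit but which is needed to chain the single steps.
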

\begin{proof}
If we apply an elementary Nielsen transformation to a generating pair $(x_1,x_2)$, then the resulting generating pair is either $(x_1^{-1},x_2)$, $(x_1,x_2^{-1})$, $(x_2,x_1)$, $(x_1x_2,x_2)$ or $(x_1,x_2x_1)$. 
Then, we see that  
\begin{align*}
&[x_1^{-1},x_2] = x_1^{-1}[x_1,x_2]^{-1}x_1,& &[x_1,x_2^{-1}] = x_2^{-1}[x_1,x_2]^{-1}x_2,& &[x_2,x_1] = [x_1,x_2]^{-1},& \\
&[x_1x_2,x_2] = [x_1,x_2],& &\mathrm{and}& &[x_1,x_2x_1] = [x_1,x_2],& 
\end{align*}
and the lemma follows. 
\end{proof}

\begin{thm}[\cite{G}, Grushko Theorem]\label{thm:G}
Let $G$ be the free product $G_1\ast G_2$ of two finitely generated groups $G_1$ and $G_2$. 
If an $n$-tuple $\mathcal{X} = (x_1,x_2,\ldots,x_n)$ of elements of $G$ with $G=\langle x_1,x_2,\ldots,x_n \rangle$, then $X$ is Nielsen equivalent to an $n$-tuple $\mathcal{Y} = (y_1,y_2,\ldots,y_k,z_1,z_2,\ldots,z_{n-k})$ with $G_1=\langle y_1,y_2,\ldots,y_k \rangle$ and $G_2= \langle z_1,z_2,\ldots,z_{n-k} \rangle$.  
\end{thm}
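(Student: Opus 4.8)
The plan is to reduce the statement to a free-factorization property of the free group $F_n$ on generators $f_1,\ldots,f_n$ and then transport that property down to $G$ via the surjection $q = q_{\mathcal{X}} : F_n \to G$ with $q(f_i) = x_i$, the factorization itself being produced by a Stallings folding argument on the Bass-Serre tree of the splitting $G = G_1 \ast G_2$. The key observation making the transport work is elementary: if $\Phi \in \mathrm{Aut}(F_n)$ is written as a composition of elementary Nielsen transformations of $(f_1,\ldots,f_n)$, then applying $q$ to the tuple $(\Phi(f_1),\ldots,\Phi(f_n))$ yields exactly the tuple obtained from $\mathcal{X}$ by the same sequence of elementary Nielsen transformations, since $q$ is a homomorphism and each generator-level operation commutes with $q$. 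By Nielsen's theorem \cite{Ni}, every element of $\mathrm{Aut}(F_n)$ arises this way. Hence it suffices to produce a single $\Phi \in \mathrm{Aut}(F_n)$ such that, writing $(b_1,\ldots,b_n) = (\Phi(f_1),\ldots,\Phi(f_n))$, the first $k$ elements satisfy $q(b_i) \in G_1$ and generate $G_1$, while the remaining $n-k$ satisfy $q(b_j) \in G_2$ and generate $G_2$; then $y_i = q(b_i)$ and $z_j = q(b_{k+j})$ furnish the desired $\mathcal{Y}$, which is Nielsen equivalent to $\mathcal{X}$.

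The core of the argument is the production of such a $\Phi$, which is precisely the content of Grushko's theorem: the surjection $q$ admits a compatible free factorization $F_n = F' \ast F''$ with $q(F') = G_1$ and $q(F'') = G_2$, so that a free basis obtained from $(f_1,\ldots,f_n)$ by an automorphism splits into bases of $F'$ and $F''$. I would obtain this by folding. Let $T$ be the Bass-Serre tree of $G = G_1 \ast G_2$; it is bipartite, with vertex stabilizers the conjugates of $G_1$ and $G_2$ and with trivial edge stabilizers. Pulling the $G$-action back along $q$ makes $F_n$ act on $T$, and one builds an $F_n$-equivariant morphism $\phi : S \to T$ from a tree $S$ carrying a free cocompact $F_n$-action whose quotient is a rose with petals $f_1,\ldots,f_n$. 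As long as $\phi$ identifies two distinct edges sharing a vertex, one performs an equivariant fold; the bookkeeping lemma is that each fold, read off on the quotient graph, alters the generating tuple only by elementary Nielsen transformations. Because the edge stabilizers of $T$ are trivial, the folding sequence terminates in a reduced morphism, and the resulting graph-of-groups structure on $F_n$ exhibits it as the required free product with each free factor mapping onto one of the $G_i$.

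The step I expect to be the main obstacle is the folding bookkeeping combined with termination: one must verify carefully that every elementary fold is realizable by Nielsen moves on the tuple, and that each fold strictly decreases a natural complexity (such as the number of edge-orbits of $S$), so that the process halts at a morphism whose combinatorics force the generators to separate cleanly into the two factors with no surviving cross terms. An alternative, purely combinatorial route avoids trees altogether: induct on the total free-product syllable length $\sum_i \ell(x_i)$, showing that whenever some $x_i$ fails to lie in $G_1 \cup G_2$ a length-reducing elementary Nielsen transformation exists, and partitioning the generators once all of them lie in a single factor. In that formulation the entire difficulty is concentrated in proving that a reducing move always exists, which is again the genuine content of Grushko's theorem.
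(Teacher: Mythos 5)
The paper does not prove this statement: it is quoted verbatim as Grushko's theorem with a citation to \cite{G}, so there is no in-paper argument to compare yours against. Judged on its own terms, your proposal identifies the two standard routes correctly. The transport step is right and worth isolating: because elementary Nielsen transformations on $(f_1,\ldots,f_n)$ generate $\mathrm{Aut}(F_n)$ and commute with the surjection $q=q_{\mathcal{X}}$, it suffices to produce a free splitting $F_n = F'\ast F''$ with $q(F')=G_1$ and $q(F'')=G_2$; choosing bases of $F'$ and $F''$ and applying Nielsen's theorem then yields the required Nielsen-equivalent tuple $\mathcal{Y}$. This reduction is clean and correct, and the Stallings folding argument on the Bass--Serre tree of $G_1\ast G_2$ is the standard modern way to produce that splitting.

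The gap is that the proposal stops exactly where the theorem begins. Everything after the reduction is a plan, not a proof: the bookkeeping lemma that each equivariant fold is realized by elementary Nielsen transformations on the tuple (or, in the cleaner formulation above, that the terminal folded morphism forces the splitting $F_n=F'\ast F''$ with the two factors surjecting onto $G_1$ and $G_2$ respectively, with no ``cross terms'') is asserted but never verified, and the termination argument is only gestured at. You say yourself that this step is ``the genuine content of Grushko's theorem,'' and the alternative syllable-length induction you sketch has the same feature --- the existence of a length-reducing Nielsen move is precisely the hard combinatorial lemma, and it is not established. So as submitted this is a correct strategy with the essential content deferred; to be a proof it needs either the fold-by-fold verification carried out (including why trivial edge stabilizers in $T$ guarantee termination and why the quotient graph of groups at the end has exactly two vertex groups mapping onto $G_1$ and $G_2$), or the length-reduction lemma proved from the normal form theorem for free products.
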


The following lemma is a weak version of Theorem 2 in \cite{MP}. 
\begin{lem}\label{lem:1000}
Let $G$ be the free product $G_1\ast_{G_0} G_2$ of two finitely generated groups $G_1$ and $G_2$ with amalgamated subgroup $G_0$, where $G_0$ is a normal subgroup. 
If an $n$-tuple $\mathcal{X} = (x_1,x_2,\ldots,x_k)$ of elements of $G$ with $G = \langle x_1,x_2,\ldots,x_n \rangle$, $X$ is Nielsen equivalent to an $n$-tuple $\mathcal{Y} = (y_1,y_2,\ldots,y_k,z_1,z_2,\ldots,z_{n-k})$ with $y_i \in G_1$ and $z_i \in G_2$ for any $i$. 
\end{lem}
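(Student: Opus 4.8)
The plan is to reduce the statement to the ordinary free-product case, Theorem~\ref{thm:G}, by passing to the quotient of $G$ by the normal amalgamated subgroup $G_0$. First I would consider the natural projection $\pi : G \to G/G_0$. Since $G_0$ is normal in $G$, it is normal in each factor, and I claim the quotient splits as a genuine free product
\[
G/G_0 \;\cong\; (G_1/G_0)\ast(G_2/G_0).
\]
This follows from the universal property of the amalgamated product: a homomorphism out of $G=G_1\ast_{G_0}G_2$ that kills $G_0$ is exactly a pair of homomorphisms out of $G_1$ and $G_2$ that are trivial on $G_0$, i.e.\ a pair of homomorphisms out of $G_1/G_0$ and $G_2/G_0$ with no remaining compatibility condition, which is precisely the universal property of the free product. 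Writing $\bar{G}_i = G_i/G_0$, these factors are finitely generated because $G_1$ and $G_2$ are.

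Next, since $\pi$ is surjective and $\mathcal{X}=(x_1,\ldots,x_n)$ generates $G$, the images $\bar{x}_i=\pi(x_i)$ generate $\bar{G}=\bar{G}_1\ast\bar{G}_2$. I would then apply Grushko's theorem to this free product and the generating tuple $(\bar{x}_1,\ldots,\bar{x}_n)$, obtaining a Nielsen equivalent tuple $(\bar{y}_1,\ldots,\bar{y}_k,\bar{z}_1,\ldots,\bar{z}_{n-k})$ with $\bar{y}_i\in\bar{G}_1$ and $\bar{z}_j\in\bar{G}_2$, together with a finite sequence of elementary Nielsen transformations realizing this equivalence in $\bar{G}$.

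The step carrying the real content is lifting this sequence back to $G$. Because each elementary Nielsen transformation is given by the same universal word formula in the entries of the tuple and $\pi$ is a homomorphism, applying the identical sequence to $\mathcal{X}$ itself produces a tuple $\mathcal{Y}=(y_1,\ldots,y_k,z_1,\ldots,z_{n-k})$, Nielsen equivalent to $\mathcal{X}$ in $G$, whose image under $\pi$ is exactly $(\bar{y}_1,\ldots,\bar{z}_{n-k})$. It then remains to observe the preimage identities $\pi^{-1}(\bar{G}_1)=G_1$ and $\pi^{-1}(\bar{G}_2)=G_2$: since $G_0\subseteq G_1$, an element maps into $\bar{G}_1=G_1/G_0$ if and only if it already lies in $G_1$, and similarly for $G_2$. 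Hence $\pi(y_i)\in\bar{G}_1$ forces $y_i\in G_1$ and $\pi(z_j)\in\bar{G}_2$ forces $z_j\in G_2$, which is the desired conclusion.

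I expect the only delicate point to be establishing the free-product decomposition of $G/G_0$ and the preimage equalities $\pi^{-1}(\bar{G}_i)=G_i$, both of which depend essentially on the normality of $G_0$; the lifting of Nielsen moves is automatic from the naturality of Nielsen transformations under the homomorphism $\pi$, and Grushko's theorem supplies all of the combinatorial work. This is exactly why the hypothesis that $G_0$ be normal, absent in the sharper Theorem~2 of \cite{MP}, makes the weak version accessible by this quotient argument.
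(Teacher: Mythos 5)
Your proposal is correct and follows essentially the same route as the paper's proof: pass to the quotient $G/G_0 \cong (G_1/G_0)\ast(G_2/G_0)$, apply Grushko's theorem (Theorem~\ref{thm:G}) to the image of the generating tuple, and apply the identical sequence of elementary Nielsen transformations to $\mathcal{X}$ in $G$ itself. You are in fact slightly more explicit than the paper in justifying the free-product splitting of the quotient and the preimage identities $\pi^{-1}(G_i/G_0)=G_i$, which the paper leaves implicit.
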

\begin{proof}
Since $G_0$ is a normal subgroup, $G/G_0$ is isomorphic to the free product $G_1/G_0 \ast G_2/G_0$, denoted by $G'$. 
This gives the natural homomorphism $\theta: G \to G'$, and hence we have $G' = \langle \theta(x_1),\theta(x_2),\ldots,\theta(x_n) \rangle$. 
By Theorem~\ref{thm:G}, we see that the $n$-tuple $(\theta(x_1),\theta(x_2),\ldots,\theta(x_n))$ is Nielsen equivalent to an $n$-tuple $\mathcal{Y}' = (y_1',y_2',\ldots,y_k', z_1',z_2',\ldots,z_{n-k}')$ such that $G_1/G_0 = \langle y_1',y_2',\ldots,y_k' \rangle$ and $G_2/G_0 = \langle z_1',z_2',\ldots,z_{n-k}' \rangle$. 
Therefore, by applying the same sequence of elementary Nielsen transformations from $\mathcal{X}'$ to $\mathcal{Y}'$, 
we convert $\mathcal{X}$ into an $n$-tuple $\mathcal{Y} = (y_1,y_2,\ldots,y_k,$ $z_1,z_2,\ldots,z_{n-k})$, where $y_i \in G_1$ and $z_i \in G_2$ for any $i$. 
This finishes the proof. 
\end{proof}

\section{Proof of Theorems~\ref{thm:1} and~\ref{thm:1000} and Proposition~\ref{prop:100}}\label{proof}
Theorem~\ref{thm:1} follows from Theorem~\ref{thm:2} and Proposition~\ref{prop:3} below. 
For abbreviation, we write $\rho_n$ and $h_0$ instead of $rt_{a_1}^n$ and $t_{b_1}t_{b_2}t_{a_3}t_{c_5}$, respectively. 
\begin{thm}\label{thm:2}
Let $g\geq 8$, and let $G$ be the subgroup of $\M$ generated by $\rho_n = rt_{a_1}^n$ and $h_0 = t_{b_1}t_{b_2}t_{a_3}t_{c_5}$. 
Then, $G = \M$ for any integer $n$. 
\end{thm}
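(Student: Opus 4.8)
The plan is to show that $G$ contains every Lickorish generator $t_{a_i}, t_{b_i}, t_{c_i}$, so that $G = \M$ by Theorem~\ref{Lickorishthm}. The central observation is that $\rho_n$ packages the rotation $r$ together with the twist $t_{a_1}^n$, and that conjugation by $\rho_n$ acts on any Dehn twist by
\[
\rho_n\, t_\gamma\, \rho_n^{-1} = t_{\rho_n(\gamma)} = t_{r t_{a_1}^n(\gamma)}.
\]
Whenever the curve $\gamma$ is disjoint from $a_1$ we have $t_{a_1}^n(\gamma)=\gamma$, so this reduces to the honest cyclic symmetry $\rho_n t_\gamma \rho_n^{-1} = t_{r(\gamma)}$, where $r$ permutes the standard curves by $r(a_i)=a_{i+1}$, $r(b_i)=b_{i+1}$, $r(c_i)=c_{i+1}$ (indices mod $g$) as in Figure~\ref{rotation}. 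First I would use this to move twists cyclically around the surface while staying inside $G$, and I would also record the useful by-product $\rho_n^{\,g} = \prod_{i=1}^g t_{a_i}^{\,n}$, obtained by pushing the $g$ copies of $r$ together and using that the $a_i$ are pairwise disjoint.

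The crucial foothold is to extract a single honest Dehn twist from the two generators, equivalently to show $r = \rho_n t_{a_1}^{-n}\in G$. Here I would exploit the structure of $h_0 = t_{b_1}t_{b_2}t_{a_3}t_{c_5}$: the four curves $b_1,b_2,a_3,c_5$ are pairwise disjoint (so the four factors commute), and for $g\geq 8$ there is enough room that they cobound a subsurface of genus $0$ with four boundary components, making $h_0$ the left-hand side of a lantern relation, $h_0 = t_z t_y t_x$ for the interior curves $x,y,z$. The idea is then to form products of $\rho_n$-conjugates of $h_0$: since the four supporting curves sit at different handles, their $\rho_n$-images have largely disjoint, trackable supports, and only the conjugates of $b_1$ can meet $a_1$ and thereby pick up the $t_{a_1}^n$ contamination. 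By arranging cancellations among these conjugates one peels off a single twist (or directly the lantern interior twists $t_x,t_y,t_z$), and hence produces $r$ itself inside $G$.

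Once $r\in G$, the argument propagates cleanly: conjugating $h_0$, its individual factors, and the lantern-interior twists by powers of $r$ fills out the complete $r$-orbits of all the curves involved, and repeated use of the lantern relation together with the braid and disjointness relations from the preliminaries lets me express the remaining generators $t_{a_i}, t_{b_i}, t_{c_i}$ as products of twists already known to lie in $G$; the bound $g\ge 8$ is what guarantees these orbits and lantern configurations reach every Lickorish generator. I expect the main obstacle to be precisely the isolation step of the previous paragraph: because conjugation by $\rho_n$ always drags along the factor $t_{a_1}^n$ whenever the moving curve crosses $a_1$, one must control exactly which $\rho_n$-conjugates of $b_1$ intersect $a_1$ and engineer the cancellations so that they are independent of $n$. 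Carrying this bookkeeping out uniformly in $n$ — so that the same generation argument works for every integer $n$ — is the delicate part, and is presumably where the genus hypothesis supplies the disjoint room needed to make it go through.
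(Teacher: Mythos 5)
Your overall frame (reduce to the Lickorish generators, exploit that conjugation by $\rho_n$ acts as the honest rotation $r$ on curves disjoint from $a_1$, and use a lantern relation to convert partial information into a genuine Dehn twist) matches the spirit of the paper, but the step you yourself flag as the crux --- extracting a single Dehn twist, equivalently $r$, from the two generators --- is not carried out, and the specific mechanism you propose for it is wrong. The four curves $b_1,b_2,a_3,c_5$ do \emph{not} cobound a four-holed sphere: their classes in $H_1(\Sigma_g;\mathbb{Z}_2)$ sum to $[b_1]+[b_2]+[a_3]+[a_5]+[a_6]\neq 0$ (since $[c_5]=[a_5]+[a_6]$ mod $2$), so their union does not even separate $\Sigma_g$ and cannot be the boundary of any subsurface. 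Hence $h_0$ is not the left-hand side of a lantern relation, and "peeling off the interior twists $t_x,t_y,t_z$" from $h_0$ has no meaning. The lantern relation does enter the paper's proof, but on a different configuration: in Lemma~\ref{lem:7} the boundary curves are $a_k,c_k,c_{k+1},a_{k+2}$ (a chain neighborhood, which genuinely bounds a four-holed sphere) with interior curves $a_{k+1},d_1,d_2$, and it is used only at the very end to upgrade a collection of \emph{differences} $t_xt_y^{-1}\in G$ into a single twist $t_{a_{k+2}}\in G$.

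The other missing idea is the one that makes the argument uniform in $n$. The "$t_{a_1}^n$ contamination" you worry about is not removed by ad hoc cancellations among conjugates; the paper's trick is to conjugate $h_1=\rho_n h_0\rho_n^{-1}=t_{t_{a_2}^n(b_2)}t_{b_3}t_{a_4}t_{c_6}$ by the $n$-th power of $h_2=\rho_n^{-1}h_0\rho_n=t_{b_g}t_{t_{a_1}^{-n}(b_1)}t_{a_2}t_{c_4}$: since the four factors of $h_2$ commute and three of them act trivially on the support of $h_1$, the factor $t_{a_2}^{-n}$ inside $h_2^{-n}$ exactly undoes the $t_{a_2}^n$-twisting, giving $h_3=h_2^{-n}h_1h_2^{n}=t_{b_2}t_{b_3}t_{a_4}t_{c_6}$ with no $n$-dependence. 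From there one manufactures differences $t_{a_3}t_{b_3}^{-1}$, $t_{b_5}t_{c_5}^{-1}$, $t_{b_5}t_{c_4}^{-1}$, etc.\ by conjugating one product of commuting twists by another and cancelling common factors, and only then invokes Lemma~\ref{lem:7}. Without the $h_2^{\pm n}$ conjugation trick and with the lantern placed on a non-bounding configuration, your outline does not yet constitute a proof.
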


Lemma~\ref{lem:7} below is used to show Theorem~\ref{thm:2}. 
\begin{lem}\label{lem:7}
Let $g\geq k+2$, where $k$ is a positive integer, and let $f$ be an element in $\M$ satisfying $f (a_k, b_k, c_k) = (a_{k+1}, b_{k+1}, c_{k+1})$ and $f(a_{k+1}, b_{k+1}) = (a_{k+2}, b_{k+2})$. 
We denote by $G'$ the subgroup of $\M$ generated by $f$, $t_{a_k} t_{a_{k+1}}^{-1}$, $t_{b_k} t_{b_{k+1}}^{-1}$ and $t_{c_k} t_{c_{k+1}}^{-1}$. 
Then, the elements $t_{a_k}, t_{b_k}, t_{c_k}$ are in $G'$.
\end{lem}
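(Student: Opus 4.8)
\emph{Proof proposal.} The plan is to use the identity $t_{f(x)} = f t_x f^{-1}$ to recognize the three ``difference'' generators as commutators with $f$, then to exploit the intersection pattern of the $a_i,b_i$ to peel off a single Dehn twist, and finally to recover $t_{c_k}$ by the lantern relation. First I would rewrite the generators. Since $f(a_k)=a_{k+1}$, the relation $t_{f(a_k)}=ft_{a_k}f^{-1}$ gives $t_{a_k}t_{a_{k+1}}^{-1}=t_{a_k}ft_{a_k}^{-1}f^{-1}=[t_{a_k},f]$, and similarly for $b_k,c_k$. Conjugating by $f$ and using the \emph{second} hypothesis $f(a_{k+1},b_{k+1})=(a_{k+2},b_{k+2})$ places two more shifted differences in $G'$:
\[
f\,(t_{a_k}t_{a_{k+1}}^{-1})\,f^{-1}=t_{a_{k+1}}t_{a_{k+2}}^{-1},\qquad f\,(t_{b_k}t_{b_{k+1}}^{-1})\,f^{-1}=t_{b_{k+1}}t_{b_{k+2}}^{-1}.
\]
So $G'$ contains $t_{a_k}t_{a_{k+1}}^{-1},\,t_{a_{k+1}}t_{a_{k+2}}^{-1},\,t_{b_k}t_{b_{k+1}}^{-1},\,t_{b_{k+1}}t_{b_{k+2}}^{-1}$. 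Note that for $c$ we only know $f(c_k)=c_{k+1}$, so the analogous shift of $t_{c_k}t_{c_{k+1}}^{-1}$ is \emph{not} available; this asymmetry is why I would treat $c_k$ separately at the end.

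Next I would isolate a single twist. Recording the intersection data, curves within the $a$-family (resp. $b$-family) are pairwise disjoint, while $a_i$ meets $b_i$ once and is disjoint from $b_j$ for $i\neq j$; hence same-family differences commute and the only braid relations in play are $t_{a_i}t_{b_i}t_{a_i}=t_{b_i}t_{a_i}t_{b_i}$. Because the same-family differences commute, no product of them alone can produce a single twist: in the abelianization of $\langle t_{a_k},t_{a_{k+1}},t_{a_{k+2}}\rangle\cong\mathbb{Z}^3$ they span only the ``sum-zero'' sublattice, which misses every individual twist. The interaction between the two families must therefore be used. Forming the mixed commutator, cancelling the disjoint off-handle curves, and applying the braid relation collapses everything to a single handle:
\[
[\,t_{a_k}t_{a_{k+1}}^{-1},\ t_{b_{k+1}}t_{b_{k+2}}^{-1}\,]=[\,t_{a_{k+1}}^{-1},\,t_{b_{k+1}}\,]=t_{a_{k+1}}^{-1}\,t_{t_{b_{k+1}}(a_{k+1})},
\]
a ratio of twists about two curves meeting once. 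Playing such handle-$(k+1)$ ratios against the cross-handle difference $t_{a_{k+1}}t_{a_{k+2}}^{-1}$ (its $t_{a_{k+1}}$-factor cancels the one above and leaves $t_{a_{k+2}}^{-1}t_{t_{b_{k+1}}(a_{k+1})}$, with $t_{b_{k+1}}(a_{k+1})$ disjoint from $a_{k+2}$) and iterating, I would generate enough intersection-one ratios to pin down, via the move $t_\beta t_\alpha(\beta)=\alpha$, the individual twist $t_{a_{k+1}}$.

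Once $t_{a_{k+1}}\in G'$, the rest is bootstrapping. Conjugating by $f^{-1}$ gives $t_{a_k}$ (since $f^{-1}(a_{k+1})=a_k$), and then $t_{a_k}t_{a_{k+1}}^{-1}$ is consistent; the same mixed-commutator argument applied to the $b$-family yields $t_{b_k}$ and $t_{b_{k+1}}$. For $t_{c_k}$ I would invoke the lantern relation: $c_k$ is an interior curve of a four-holed sphere whose boundary and remaining interior curves lie among the $a$- and $b$-curves already shown to be in $G'$, so the lantern relation expresses a product containing $t_{c_k}$ in terms of twists now known to lie in $G'$; combined with the generator $t_{c_k}t_{c_{k+1}}^{-1}$ this isolates $t_{c_k}$.

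The main obstacle is precisely the isolation step, that is, producing a genuine single twist rather than a difference. The commuting differences live in an abelian sublattice that omits the individual twists, while the single-handle commutators generate, inside the handle's copy of $\mathrm{SL}(2,\mathbb{Z})$, only a commutator subgroup that also omits the transvections; so neither ingredient suffices alone. One must carefully interleave the handle-$(k+1)$ commutators with the cross-handle differences so that off-handle twists cancel by disjointness and a chain or lantern relation collapses the remainder to exactly one twist. Pinning down the precise intersection pattern (in particular how $c_k$ sits in a lantern) and carrying out this cancellation cleanly is where the real work lies.
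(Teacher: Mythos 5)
Your setup is right and matches the paper: conjugating the given differences by $f$ to obtain $t_{a_{k+1}}t_{a_{k+2}}^{-1}$ and $t_{b_{k+1}}t_{b_{k+2}}^{-1}$, and your diagnosis that the whole problem is to isolate a single twist from these ``difference'' elements, is exactly the crux. But you do not actually carry out that isolation, and the method you sketch for it cannot work as described. Every operation you propose --- taking products of differences, conjugating one difference by another, forming mixed commutators such as $[t_{a_k}t_{a_{k+1}}^{-1},\,t_{b_{k+1}}t_{b_{k+2}}^{-1}]=t_{a_{k+1}}^{-1}t_{t_{b_{k+1}}(a_{k+1})}$, and applying the balanced relations $t_at_b=t_bt_a$ and $t_at_bt_a=t_bt_at_b$ --- only ever produces further elements of the form $t_\alpha t_\beta^{-1}$ and products thereof. ``Iterating'' these moves changes which curves appear but never breaks the balance; to write a single twist as a product of differences you must invoke an \emph{unbalanced} relation, and you defer this to a vague mention of ``a chain or lantern relation'' without exhibiting the curves. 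That exhibition is precisely the content of the paper's proof: it first uses the differences themselves as mapping classes to transport curves (e.g.\ $f_1=t_{a_k}t_{b_k}t_{a_{k+1}}^{-1}t_{b_{k+1}}^{-1}$ sends $a_k$ to $b_k$, giving $t_{b_k}t_{a_{k+2}}^{-1}\in G'$, and similarly $t_{c_k}t_{a_{k+2}}^{-1}\in G'$), then constructs two auxiliary curves $d_1,d_2$ (not among the standard $a_i,b_i,c_i$) as images of $b_{k+1}$ under explicit products of differences so that $t_{d_1}t_{a_k}^{-1},t_{d_2}t_{c_k}^{-1}\in G'$, and finally applies the lantern relation $t_{a_k}t_{c_k}t_{c_{k+1}}t_{a_{k+2}}=t_{a_{k+1}}t_{d_1}t_{d_2}$, rearranged as
\[
t_{a_{k+2}}=\bigl(t_{a_{k+1}}t_{c_{k+1}}^{-1}\bigr)\cdot\bigl(t_{d_1}t_{a_k}^{-1}\bigr)\cdot\bigl(t_{d_2}t_{c_k}^{-1}\bigr),
\]
expressing one twist as a product of three differences already known to lie in $G'$.

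Two further points. First, once a single twist such as $t_{a_{k+2}}$ is in $G'$, the endgame is immediate from the differences $t_{a_k}t_{a_{k+2}}^{-1}$, $t_{b_k}t_{a_{k+2}}^{-1}$, $t_{c_k}t_{a_{k+2}}^{-1}$; your separate lantern argument for $t_{c_k}$ is unnecessary, and in any case you never verify that $c_k$ sits in a lantern whose other six curves are already available. Second, your plan to recover $t_{a_k}$ from $t_{a_{k+1}}$ by conjugating with $f^{-1}$ is fine in principle, but the element you would need to isolate first never materializes. So the proposal correctly identifies the obstacle but leaves the essential step --- the specific curves $d_1,d_2$ and the unbalanced lantern identity --- unproved.
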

\begin{proof}
From the assumptions that $f, t_{a_k} t_{a_{k+1}}^{-1}, t_{b_k} t_{b_{k+1}}^{-1} \in G'$ and $f (a_k, b_k, c_k) = (a_{k+1}, b_{k+1}, c_{k+1})$, the following holds: 
\begin{align*}
&t_{a_{k+1}} t_{a_{k+2}}^{-1} = t_{f(a_{k})} t_{f(a_{k+1})}^{-1} = f t_{a_k} t_{a_{k+1}}^{-1} f^{-1} \in G', \\
&t_{b_{k+1}} t_{b_{k+2}}^{-1} = t_{f(b_{k})} t_{f(b_{k+1})}^{-1} = f t_{b_k} t_{b_{k+1}}^{-1} f^{-1} \in G', \ \mathrm{and} \\
&t_{a_k} t_{a_{k+2}}^{-1} = t_{a_k} t_{a_{k+1}}^{-1} \cdot t_{a_{k+1}} t_{a_{k+2}}^{-1} \in G'. 
\end{align*}
Here, we set $f_1 = t_{a_{k}} t_{a_{k+1}}^{-1} \cdot t_{b_{k}} t_{b_{k+1}}^{-1}$, and hence $f_1$ is in $G'$ by the assumption. 
Since $a_{k+1}$ is disjoint from $b_{k}$, we obtain $f_1 = t_{a_{k}} t_{b_{k}} t_{a_{k+1}}^{-1} t_{b_{k+1}}^{-1}$. 
Moreover, from the fact that $a_i,b_i,c_i$ are disjoint from $a_j,b_j,c_j$ if $|i-j|>1$ and that $a_i$ intersects $b_i$ transversely at exactly one point, we have 
\begin{align*}
&f_1 (a_{k},a_{k+2}) = (t_{a_{k}} t_{b_{k}} (a_{k}), a_{k+2}) = (b_{k}, a_{k+2}), \ \mathrm{and} \\
&t_{b_{k}} t_{a_{k+2}}^{-1} = t_{f_1(a_{k})} t_{f_1(a_{k+2})}^{-1} = f_1 t_{a_k} t_{a_{k+2}}^{-1} f_1^{-1} \in G'. 
\end{align*}
Next, we set $f_2 = t_{b_k} t_{a_{k+2}}^{-1} \cdot t_{c_{k}} t_{c_{k+1}}^{-1}$, and hence $f_2$ is in $G'$ by the assumption. 
Since $a_{k+2}$ is disjoint from $c_{k}$, we obtain $f_2 = t_{b_{k}} t_{c_{k}} t_{a_{k+2}}^{-1} t_{c_{k+1}}^{-1}$. 
By a similar argument to the case of $f_1$, we have
\begin{align*}
&f_2 (b_{k},a_{k+2}) = (t_{b_{k}} t_{c_{k}} (b_{k}), a_{k+2}) = (c_{k}, a_{k+2}), \ \mathrm{and} \\
&t_{c_{k}} t_{a_{k+2}}^{-1} = t_{f_2(b_{k})} t_{f_2(a_{k+2})}^{-1} = f_2 t_{b_k} t_{a_{k+2}}^{-1} f_2^{-1} \in G'. 
\end{align*}
From the argument above, we obtain the following:
\begin{align*}
&t_{a_k} t_{b_{k+1}}^{-1} = t_{a_k} t_{a_{k+2}}^{-1} \cdot (t_{b_{k}} t_{a_{k+2}}^{-1})^{-1} \cdot t_{b_{k}} t_{b_{k+1}}^{-1} \in G', \\
&t_{a_k} t_{c_k}^{-1} = t_{a_k} t_{a_{k+2}}^{-1} \cdot (t_{c_{k}} t_{a_{k+2}}^{-1})^{-1} \in G', \\
&t_{a_k} t_{c_{k+1}}^{-1} = t_{a_k} t_{c_k}^{-1} \cdot t_{c_{k}} t_{c_{k+1}}^{-1} \in G', \ \mathrm{and} \\
&t_{a_k} t_{b_{k+2}}^{-1} = t_{a_k} t_{b_{k+1}}^{-1} \cdot  t_{b_{k+1}} t_{b_{k+2}}^{-1} \in G'. 
\end{align*}
Let $d_1$ and $d_2$ be the simple closed curves in $\Sg$ as in Figure~\ref{curves1}. 
Then, it is easy to check that 
\begin{align*}
 t_{b_{k+1}} t_{a_k}^{-1} \cdot  t_{c_k} t_{a_k}^{-1} \cdot t_{a_k} t_{a_{k+1}}^{-1} \cdot t_{c_{k+1}} t_{a_{k}}^{-1} (b_{k+1}, a_k) &= (d_1, a_k) \\
  t_{b_{k+2}}t_{a_k}^{-1} \cdot t_{c_{k+1}} t_{a_k}^{-1} \cdot t_{a_{k+2}} t_{a_k}^{-1} \cdot t_{b_{k+2}} t_{a_k}^{-1}  (d_1, a_k) &= (d_2, a_k). 
\end{align*}
When we set $\phi_1 = t_{b_{k+1}} t_{a_k}^{-1} \cdot  t_{c_k} t_{a_k}^{-1} \cdot t_{a_k} t_{a_{k+1}}^{-1} \cdot t_{c_{k+1}} t_{a_{k}}^{-1}$ and $\phi_2 = t_{b_{k+2}}t_{a_k}^{-1} \cdot t_{c_{k+1}} t_{a_k}^{-1} \cdot t_{a_{k+2}} t_{a_k}^{-1} \cdot t_{b_{k+2}} t_{a_k}^{-1}$, by $\phi_1, \phi_2, t_{b_{k+1}} t_{a_k}^{-1} \in G'$ we see that 
\begin{align*}
& t_{d_1} t_{a_k}^{-1} = \phi_1 \cdot t_{b_{k+1}} t_{a_k}^{-1} \cdot \phi_1^{-1}\in G', \ \mathrm{and} \\
& t_{d_2} t_{a_k}^{-1} = \phi_2 \cdot t_{d_1} t_{a_k}^{-1} \cdot \phi_2^{-1} \in G'. 
\end{align*}
Moreover, by $t_{a_k} t_{c_k}^{-1} \in G'$, we have $t_{d_2} t_{c_k}^{-1} = t_{d_2} t_{a_k}^{-1} \cdot t_{a_k} t_{c_k}^{-1} \in G'$. 
Therefore, by the lantern relation $t_{a_k} t_{c_k} t_{c_{k+1}} t_{a_{k+2}} = t_{a_{k+1}} t_{d_1} t_{d_2}$ and $t_{a_{k+1}} t_{c_{k+1}}^{-1} = (t_{a_k}t_{a_{k+1}}^{-1})^{-1} \cdot t_{a_k} t_{c_{k+1}}^{-1} \in G'$, we have 
\begin{align*}
t_{a_{k+2}} = t_{a_{k+1}} t_{c_{k+1}}^{-1} \cdot t_{d_1} t_{a_k}^{-1} \cdot t_{d_2} t_{c_k}^{-1} \in G'. 
\end{align*}
By $t_{a_{k}}t_{a_{k+2}}^{-1}, t_{b_k} t_{a_{k+2}}^{-1}, t_{c_k} t_{a_{k+2}}^{-1} \in G'$, the elements $t_{a_k}, t_{b_k}, t_{c_k}$ are contained in $G'$, and the lemma follows. 
\end{proof}
\begin{figure}[hbt]
  \centering
       \includegraphics[scale=.20]{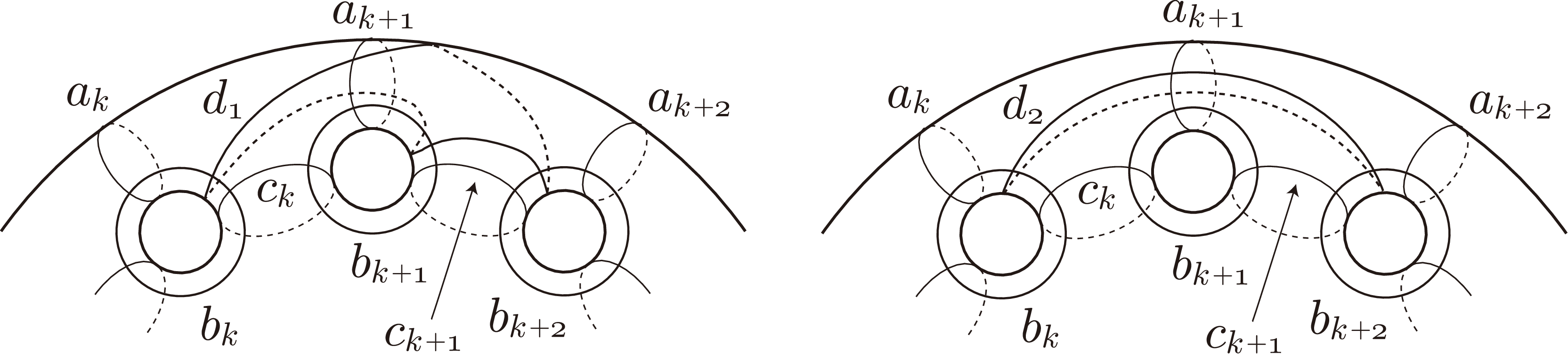}
       \caption{The simple closed curve $d_1$ and $d_2$ on $\Sigma_g$.}
       \label{curves1}
  \end{figure}

\begin{proof}[Proof of Theorem~\ref{thm:2}]
Suppose that $g\geq 8$. 

We set $h_1 = \rho_n h_0 \rho_n^{-1}$ and $h_2 = \rho_n^{-1} h_0 \rho_n$, and hence $h_1$ and $h_2$ are in $G$.  
Then, we see that 
\begin{align*}
h_1 &= \rho_n t_{b_1}t_{b_2}t_{a_3}t_{c_5} \rho_n^{-1} = t_{\rho_n(b_1)}t_{\rho_n(b_2)}t_{\rho_n(a_3)}t_{\rho_n(c_5)}, \ \mathrm{and} \\
h_2 &= \rho_n^{-1} t_{b_1}t_{b_2}t_{a_3}t_{c_5} \rho_n = t_{\rho_n^{-1}(b_1)}t_{\rho_n^{-1}(b_2)}t_{\rho_n^{-1}(a_3)}t_{\rho_n^{-1}(c_5)}.
\end{align*} 
Since $a_1$ is disjoint from $b_g, a_2, b_2, a_3, c_4, c_5$ (see Figure~\ref{curves} (1)), we have
\begin{align*}
&\rho_n(b_1,b_2,a_3,c_5) = r t_{a_1}^n(b_1,b_2,a_3,c_5) = r(t_{a_1}^n(b_1),b_2,a_3,c_5) = (t_{a_2}^n(b_2),b_3,a_4,c_6), \ \mathrm{and}\\
&\rho_n^{-1}(b_1,b_2,a_3,c_5) = t_{a_1}^{-n}r^{-1}(b_1,b_2,a_3,c_5) = t_{a_1}^{-n}(b_g,b_1,a_2,c_4) = (b_g, t_{a_1}^{-n}(b_1),a_2,c_4).  
\end{align*}
Therefore, we have $h_1 = t_{t_{a_2}^n(b_2)} t_{b_3} t_{a_4} t_{c_6}$ and $h_2 = t_{b_g} t_{t_{a_1}^{-n}(b_1)} t_{a_2} t_{c_4}$.

Let $h_3 = h_2^{-n} h_1 h_2^n$, and hence $h_3$ is in $G$ by $h_1,h_2 \in G$. 
Then, we see that 
\begin{align*}
h_3 = h_2^{-n} t_{t_{a_2}^n(b_2)} t_{b_3} t_{a_4} t_{c_6} h_2^n =    t_{h_2^{-n}(t_{a_2}^n(b_2))}   t_{h_2^{-n}(b_3)} t_{h_2^{-n}(a_4)} t_{h_2^{-n}(c_6)}. 
\end{align*}
Since $b_g, t_{a_1}^{-n}(b_1), a_2, c_4$ are disjoint from each other (see Figure~\ref{curves} (2)), we have 
\begin{align*}
h_2^{-n} = (t_{b_g} t_{t_{a_1}^{-n}(b_1)} t_{a_2} t_{c_4})^{-n} = t_{a_2}^{-n} t_{b_g}^{-n} t_{t_{a_1}^{-n}(b_1)}^{-n} t_{c_4}^{-n}.
\end{align*} 
Here, by the assumption that $g\geq 8$,  $c_6$ is disjoint from $b_g$. 
Since $b_g, t_{a_1}^{-n}(b_1), c_4$ (resp. $a_2$) are disjoint from $t_{a_2}^n(b_2),b_3,a_4,c_6$ (resp. $b_3,a_4,c_6$) (see Figure~\ref{curves} (3) (resp. (4))), we obtain 
\begin{align*}
h_2^{-n}(t_{a_2}^n(b_2),b_3,a_4,c_6) = (t_{a_2}^{-n}(t_{a_2}^n(b_2)), b_3, a_4, c_6) = (b_2, b_3, a_4, c_6). 
\end{align*}
This gives $h_3 = t_{b_2} t_{b_3} t_{a_4} t_{c_6}$.

Let $h_4 = (h_0h_3)h_0(h_0h_3)^{-1}$, and hence $h_4$ is in $G$ by $h_3 \in G$. 
Then, we see that 
\begin{align*}
h_4 = (h_0h_3)t_{b_1}t_{b_2}t_{a_3}t_{c_5}(h_0h_3)^{-1} = t_{h_0h_3(b_1)}t_{h_0h_3(b_2)}t_{h_0h_3(a_3)}t_{h_0h_3(c_5)}. 
\end{align*}
Here, $b_1, b_2, a_4, c_5, c_6$ are disjoint from each other and $a_3, b_3$, and $a_3$ intersects $b_3$ transversely at exactly one point (see Figure~\ref{curves} (5)). 
This gives
\begin{align*}
&h_0h_3 = t_{b_1} t_{b_2} t_{a_3} t_{c_5} t_{b_2} t_{b_3} t_{a_4} t_{c_6} = t_{a_3} t_{b_3} \cdot t_{b_1} t_{b_2}^2 t_{c_5} t_{a_4} t_{c_6}, \\ 
&h_0h_3(b_1, b_2, a_3, c_5) = (b_1, b_2, t_{a_3} t_{b_3}(a_3), c_5) = (b_1, b_2, b_3, c_5), 
\end{align*}
and hence we have $h_4 = t_{b_1} t_{b_2} t_{b_3} t_{c_5}$. 
Since $b_1,b_2,c_5$ are disjoint from each other and $a_3,b_3$ (see Figure~\ref{curves} (5)), we obtain $h_0 h_4^{-1} =  t_{b_1}t_{b_2}t_{a_3}t_{c_5} t_{c_5}^{-1} t_{b_3}^{-1} t_{b_2}^{-1} t_{b_1}^{-1} = t_{a_3} t_{b_3}^{-1}$, and hence we get $t_{a_3} t_{b_3}^{-1} = h_0h_4^{-1} \in G$. 
We note that $\rho_n^2(a_3,b_3) = rt_{a_1}^n rt_{a_1}^n(a_3,b_3) = (a_5,b_5)$ since $a_1$ is disjoint from  $a_3,b_3,a_4,b_4$. 
Therefore, we get $t_{a_5} t_{b_5}^{-1} = t_{\rho_n^2(a_3)} t_{\rho_n^2(b_3)}^{-1} = \rho_n^2 t_{a_3} t_{b_3}^{-1} \rho_n^{-2} \in G$.

Let $h_5 = (h_4 t_{b_5} t_{a_5}^{-1}) h_4 (h_4 t_{b_5} t_{a_5}^{-1})^{-1}$, and hence $h_5$ is in $G$ by $h_4, t_{a_5} t_{b_5}^{-1} \in G$. 
Then, by $h_4 = t_{b_1} t_{b_2} t_{b_3} t_{c_5}$, we see that 
\begin{align*}
h_5 = t_{h_4 t_{b_5} t_{a_5}^{-1}(b_1)}t_{h_4 t_{b_5} t_{a_5}^{-1}(b_2)}t_{h_4 t_{b_5} t_{a_5}^{-1}(b_3)} t_{h_4 t_{b_5} t_{a_5}^{-1}(c_5)}.  
\end{align*}
Here, from Figure~\ref{curves} (6) we see that
\begin{itemize}
\item $b_1, b_2, b_3$ are disjoint from each other and $a_5, b_5, c_5$, 
\item $c_5$ is disjoint from $a_5$, and 
\item $c_5$ intersects $b_5$ transversely at exactly one point. 
\end{itemize}
These give 
\begin{align*}
&h_4 t_{b_5} t_{a_5}^{-1} = t_{b_1} t_{b_2} t_{b_3} t_{c_5} t_{b_5} t_{a_5}^{-1} = t_{c_5} t_{b_5} t_{a_5}^{-1} t_{b_1} t_{b_2} t_{b_3}, \ \mathrm{and} \\
&h_4 t_{b_5} t_{a_5}^{-1} (b_1, b_2, b_3, c_5) = (b_1, b_2, b_3, t_{c_5} t_{b_5} (c_5)) = (b_1, b_2, b_3, b_5), 
\end{align*}
and hence we have $h_5 = t_{b_1} t_{b_2} t_{b_3} t_{b_5}$. 
Since $b_1,b_2,b_3$ are disjoint from $b_5,c_5$ (see Figure~\ref{curves} (6)), we obtain $h_5h_4^{-1} = t_{b_1} t_{b_2} t_{b_3} t_{b_5} \cdot t_{c_5}^{-1} t_{b_3}^{-1} t_{b_2}^{-1} t_{b_1}^{-1} = t_{b_5}t_{c_5}^{-1}$,  and hence we get $t_{b_5} t_{c_5}^{-1} = h_5h_4^{-1} \in G$.

Let $h_6 = (h_2 t_{b_5} t_{a_5}^{-1}) h_2 (h_2 t_{b_5} t_{a_5}^{-1})^{-1}$, and hence $h_6$ is in $G$ by $h_2, t_{a_5} t_{b_5}^{-1} \in G$. 
Then, by $h_2 = t_{b_g} t_{t_{a_1}^{-n}(b_1)} t_{a_2} t_{c_4}$, we see that
\begin{align*}
h_6 = t_{h_2 t_{b_5} t_{a_5}^{-1}(b_g)}t_{h_2 t_{b_5} t_{a_5}^{-1}(t_{a_1}^{-n}(b_1))}t_{h_2 t_{b_5} t_{a_5}^{-1}(a_2)}t_{h_2 t_{b_5} t_{a_5}^{-1}(c_4)}. 
\end{align*}
Here, from Figure~\ref{curves} (7) we see that 
\begin{itemize}
\item $b_g, t_{a_1}^{-n}(b_1), a_2$ are disjoint from each other and $c_4, a_5, b_5$, 
\item $c_4$ is disjoint from $a_5$, and
\item $b_5$ intersects $c_4$ transversely at exactly one point. 
\end{itemize}
These give 
\begin{align*}
&h_2 t_{b_5} t_{a_5}^{-1} = t_{b_g} t_{t_{a_1}^{-n}(b_1)} t_{a_2} t_{c_4} t_{b_5} t_{a_5}^{-1}, \ \mathrm{and} \\
&h_2 t_{b_5} t_{a_5}^{-1} (b_g, t_{a_1}^{-n}(b_1), a_2, c_4) = (b_g, t_{a_1}^{-n}(b_1), a_2, t_{c_4}t_{b_5}(c_4)) =  (b_g, t_{a_1}^{-n}(b_1), a_2, b_5), 
\end{align*}
and hence we have $h_6 = t_{b_g} t_{t_{a_1}^{-n}(b_1)} t_{a_2} t_{b_5}$. 
Since $b_g,t_{a_1}^{-n}(b_1),a_2$ are disjoint from $c_4,b_5$ (see Figure~\ref{curves} (7)), we obtain $h_6h_2^{-1} =  t_{b_g} t_{t_{a_1}^{-n}(b_1)} t_{a_2} t_{b_5} \cdot  t_{c_4}^{-1} t_{a_2}^{-1} t_{t_{a_1}^{-n}(b_1)}^{-1} t_{b_g}^{-1} = t_{b_5}t_{c_4}^{-1}$, and hence we get $t_{b_5} t_{c_4}^{-1} = h_6h_2^{-1} \in G$.

Summarizing, we have $t_{a_5}t_{b_5}^{-1}, t_{b_5}t_{c_5}^{-1}, t_{b_5}t_{c_4}^{-1} \in G$. 
We see that $\rho_n(a_5,b_5,c_4,c_5) = rt_{a_1}^n(a_5,b_5,c_4,c_5) = (a_6,b_6,c_5,c_6)$ since $a_1$ is disjoint from $a_5,b_5,c_4,c_5$. 
This gives 
\begin{align*}
&t_{a_6}t_{b_6}^{-1} = t_{\rho_n(a_5)} t_{\rho_n(b_5)}^{-1} = \rho_n t_{a_5}t_{b_5}^{-1} \rho_n^{-1} \in G, \\
&t_{b_6}t_{c_5}^{-1} = t_{\rho_n(b_5)} t_{\rho_n(c_4)}^{-1} = \rho_n t_{b_6}t_{c_4}^{-1} \rho_n^{-1} \in G \ \mathrm{and} \\
&t_{b_6}t_{c_6}^{-1} = t_{\rho_n(b_5)} t_{\rho_n(c_5)}^{-1} = \rho_n t_{b_5}t_{c_5}^{-1} \rho_n^{-1} \in G. 
\end{align*}
Therefore, we get
\begin{align*}
&t_{c_5} t_{c_6}^{-1} = (t_{b_6}t_{c_5}^{-1})^{-1} \cdot t_{b_6} t_{c_6}^{-1} \in G, \\
&t_{b_5} t_{b_6}^{-1} = t_{b_5}t_{c_5}^{-1} \cdot (t_{b_6} t_{c_5}^{-1})^{-1} \in G \ \mathrm{and} \\
&t_{a_5} t_{a_6}^{-1} = t_{a_5}t_{b_5}^{-1} \cdot t_{b_5} t_{b_6}^{-1} \cdot (t_{a_6} t_{b_6}^{-1})^{-1} \in G. 
\end{align*}
We note that $\rho_n(a_i,b_i,c_i)=rt_{a_1}^n(a_i,b_i,c_i) = (a_{i+1},b_{i+1},c_{i+1})$ for $2\leq i\leq g-1$ since $a_1$ is disjoint from $a_i,b_i,c_i$. 
From Lemma~\ref{lem:7}, we see that the elements $t_{a_5}, t_{b_5} , t_{c_5}$ are in $G$ (by letting $k = 5$ and $f = \rho_n$). 
In particular, since $\rho_n(a_1) = rt_{a_1}^n(a_1) = r(a_1) = a_2$, we have $\rho_n^{-4}(a_5) = a_1$, and hence we get $t_{a_1} = t_{\rho_n^{-4}(a_5)} = \rho_n^{-4} t_{a_5} \rho_n \in G$ and $r = r t_{a_1}^{n} t_{a_1}^{-n} = \rho_n t_{a_1}^{-n} \in G$. 
By $r^j(a_5,b_5,c_5) = (a_{j+5},b_{j+5},c_{j+5})$ for $j=-4,-3,\ldots,g-5$ and $t_{a_5}, t_{b_5}, t_{c_5} \in G$, we see that $t_{a_{j+5}} = t_{r^j(a_5)} = r^j t_{a_5} r^{-j} \in G$, $t_{b_{j+5}} = t_{r^j(b_5)} = r^j t_{b_5} r^{-j} \in G$ and $t_{c_{j+5}} = t_{r^j(c_5)} = r^j t_{c_5} r^{-j} \in G$. 
By Theorem~\ref{Lickorishthm}, we have $G=\M$, which completes the proof. 
\end{proof}
\begin{figure}[hbt]
  \centering
       \includegraphics[scale=.60]{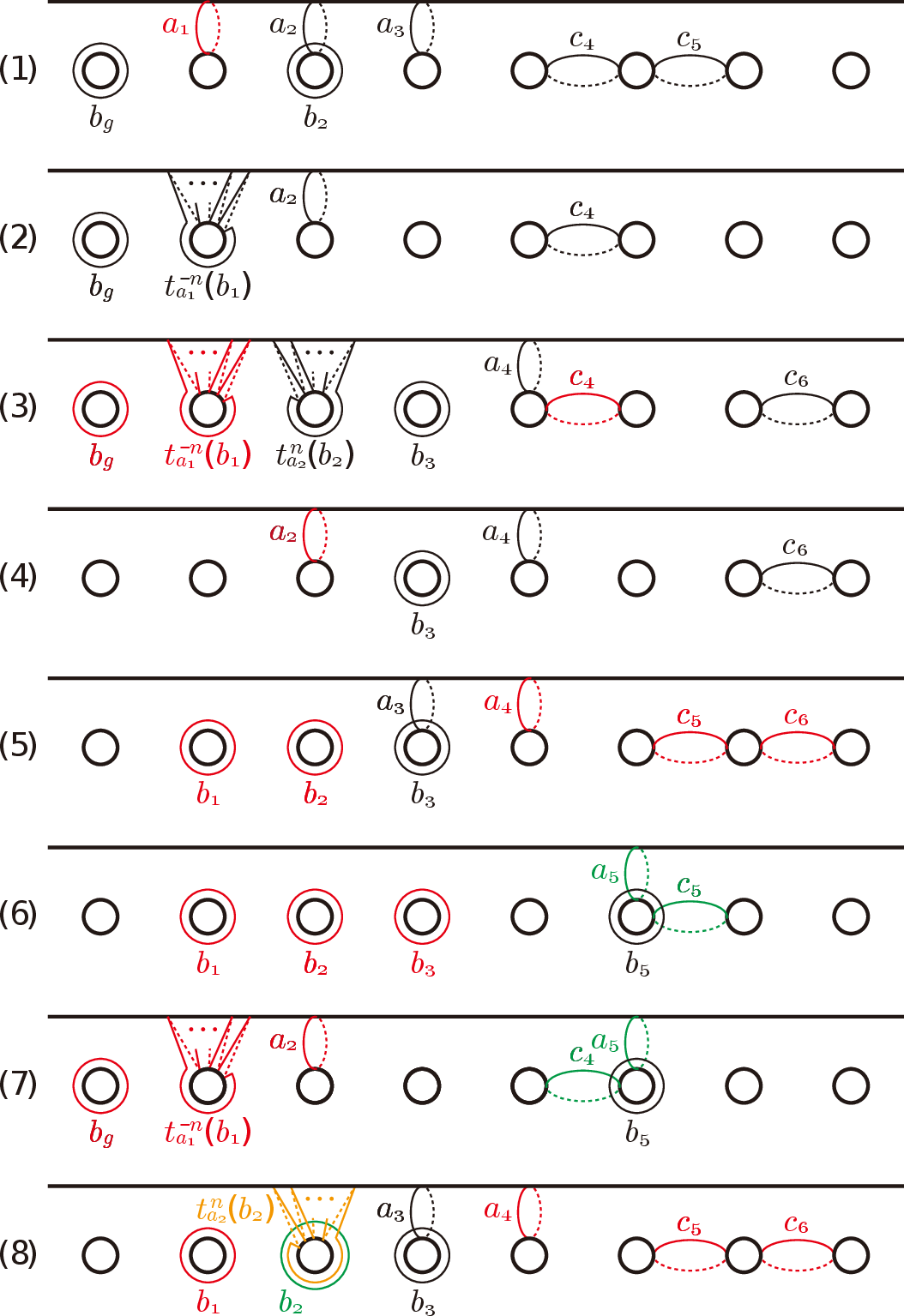}
       \caption{The curves appearing in the proof of Theorem~\ref{thm:2} and Proposition~\ref{prop:3}.}
       \label{curves}
  \end{figure}

\begin{prop}\label{prop:3}
The pair $(h_0, \rho_m)$ is not conjugate to $(h_0, \rho_n)$ if $m \neq n$ and $m,n>0$, where $\rho_n = rt_{a_1}^n$ and $h_0 = t_{b_1}t_{b_2}t_{a_3}t_{c_5}$.  
\end{prop}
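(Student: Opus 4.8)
The plan is to prove the stronger statement that $\rho_m$ and $\rho_n$ are already non-conjugate in $\M$ whenever $m\neq n$ and $m,n>0$; since conjugacy of the pairs $(h_0,\rho_m)$ and $(h_0,\rho_n)$ requires in particular a single $\phi\in\M$ with $\phi\rho_m\phi^{-1}=\rho_n$, this suffices and the condition on $h_0$ will never be used. The guiding idea is to extract from $\rho_n$ a conjugacy invariant that genuinely records the integer $n$. A naive attempt via the symplectic representation $\M\to\mathrm{Sp}(2g,\mathbb{Z})$ is unpromising: the image of $\rho_n$ is a cyclic shift of the $a$- and $b$-classes perturbed by a single off-diagonal entry equal to $n$, so its characteristic polynomial is $(\lambda^g-1)^2$ independently of $n$, and the trace invariants one would first try (for instance those of $\overline{h_0\rho_n}$ or of $\overline{\rho_n^{\,g}}$) also fail to record $n$. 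Hence I would argue geometrically rather than homologically.

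The key step is to pass to the $g$-th power. Using $r^g=\mathrm{id}$ together with $r\,t_{a_i}\,r^{-1}=t_{r(a_i)}=t_{a_{i+1}}$ (indices mod $g$), one pushes all copies of $r$ in $(rt_{a_1}^n)^g$ to the right and obtains the multitwist
\[
\rho_n^{\,g}=t_{a_1}^n t_{a_2}^n\cdots t_{a_g}^n,
\]
a product of commuting twists about the pairwise disjoint, pairwise non-isotopic, essential curves $a_1,\dots,a_g$, all with twisting coefficient $n$. Now suppose $\phi\in\M$ satisfies $\phi\rho_m\phi^{-1}=\rho_n$. Raising to the $g$-th power and conjugating the displayed formula yields the identity of multitwists
\[
t_{\phi(a_1)}^m t_{\phi(a_2)}^m\cdots t_{\phi(a_g)}^m=t_{a_1}^n t_{a_2}^n\cdots t_{a_g}^n,
\]
whose left-hand side is supported on the multicurve $\{\phi(a_1),\dots,\phi(a_g)\}$ with every coefficient equal to $m$, and whose right-hand side is supported on $\{a_1,\dots,a_g\}$ with every coefficient equal to $n$.

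To finish I would invoke the uniqueness of the multitwist presentation: a multitwist all of whose twisting coefficients are nonzero determines its supporting multicurve (which is exactly its canonical reduction system) together with the coefficient attached to each component. Applying this to the displayed equality — legitimate precisely because $m,n>0$, so that no coefficient vanishes and neither side degenerates — forces the two supports to coincide and, comparing the coefficients on a common component, gives $m=n$, contradicting $m\neq n$. The step that needs the most care is exactly this uniqueness statement, namely that the exponent of a multitwist, and not merely its support, is a genuine conjugacy invariant; this is where the proof really uses surface topology (via canonical reduction systems, or equivalently via the linear growth of the geometric intersection numbers $i(\rho_n^{\,gk}(b_1),b_1)$ in $k$), and it is precisely the information that the failed homological invariants above do not provide. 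Finally, the argument is robust to orientation conventions: if one allowed $\phi\in\EM$ with $\phi$ orientation-reversing, the same computation would produce $-m=n$, again impossible for $m,n>0$.
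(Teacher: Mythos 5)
Your argument is correct as a proof of the proposition \emph{as literally stated}, and it takes a genuinely different route from the paper. You show that $\rho_m$ and $\rho_n$ are already non-conjugate as single elements of $\M$, by passing to $\rho_n^{\,g}=t_{a_1}^n\cdots t_{a_g}^n$ and invoking the uniqueness of the support and coefficients of a multitwist (equivalently, its canonical reduction system); that uniqueness is standard and your computation of the $g$-th power is right, so simultaneous conjugacy of the pairs is indeed impossible for $m\neq n$, $m,n>0$. The paper instead works with the commutator $[h_0,\rho_n]=t_{b_1}t_{a_4}^{-1}t_{c_5}t_{c_6}^{-1}\cdot t_{b_2}t_{t_{a_2}^n(b_2)}^{-1}\cdot t_{a_3}t_{b_3}^{-1}$ and distinguishes the classes homologically, by exhibiting an invariant $2$-dimensional subspace of $H_1(\Sigma_g;\mathbb{R})$ on which the eigenvalues depend monotonically on $n$. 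Your approach is more elementary on the linear-algebra side but relies on the (nontrivial) structure theory of multitwists; the paper's stays entirely within the symplectic representation, which your opening paragraph correctly observes cannot see $n$ in $\rho_n$ itself but which does see $n$ in the commutator.

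The serious caveat is that you prove strictly less than the paper's proof does, and the difference matters downstream. The paper's proof actually establishes that $[h_0,\rho_m]$ is not conjugate to $[h_0,\rho_n]^{\pm 1}$ for $m\neq n$, and it is \emph{this} statement, fed into Lemma~\ref{commutator}, that yields infinitely many Nielsen classes in Theorem~\ref{thm:1}. Non-conjugacy of $\rho_m$ and $\rho_n$ (or of the pairs under simultaneous conjugation) gives no control over Nielsen equivalence: elementary Nielsen transformations scramble the individual entries of a generating pair completely, so two pairs can be Nielsen equivalent with no entry of one conjugate to any entry of the other. If your proof were substituted for the paper's, the deduction of Theorem~\ref{thm:1} would break. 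To rescue your approach you would need to extract an $n$-detecting conjugacy invariant of the commutator $[h_0,\rho_n]$ itself (up to inversion), not of $\rho_n$; the power trick does not obviously apply there, since $[h_0,\rho_n]$ is not periodic-times-multitwist in any evident way, whereas the paper's homological eigenvalue computation handles it directly.
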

\begin{proof}
Let us consider the commutator $[h_0,\rho_n]$. 
Then, we see that 
\begin{align*}
[h_0,\rho_n] = t_{b_1}t_{b_2}t_{a_3}t_{c_5} \rho_n (t_{b_1}t_{b_2}t_{a_3}t_{c_5})^{-1} \rho_n^{-1} = t_{b_1}t_{b_2}t_{a_3}t_{c_5} (t_{\rho_n(b_1)}t_{\rho_n(b_2)}t_{\rho_n(a_3)}t_{\rho_n(c_5)})^{-1}. 
\end{align*}
Here, since $a_1$ is disjoint from $b_2,a_3,c_5$ (see Figure~\ref{curves} (1)), we have
\begin{align*}
\rho_n(b_1,b_2,a_3,c_5) &= r t_{a_1}^n(b_1,b_2,a_3,c_5) = r(t_{a_1}^n(b_1),b_2,a_3,c_5) = (t_{a_2}^n(b_2),b_3,a_4,c_6). 
\end{align*}
Moreover, since $b_1, a_4, c_5, c_6$ (resp. $b_2, t_{a_2}^n(b_2)$) are disjoint from $b_2, t_{a_2}^n(b_2), a_3, b_3$ (resp. $a_3, b_3$) (see Figure~\ref{curves} (8)), we obtain 
\begin{align*}
[h_0,\rho_n] = t_{b_1}t_{b_2}t_{a_3}t_{c_5} (t_{t_{a_2}^n(b_2)}t_{b_3}t_{a_4}t_{c_6})^{-1} = t_{b_1} t_{a_4}^{-1} t_{c_5} t_{c_6}^{-1} \cdot t_{b_2} t_{t_{a_2}^n(b_2)}^{-1} \cdot t_{a_3}t_{b_3}^{-1}. 
\end{align*}
If two elements of  $\M$ are conjugate to each other, then the sets of eigenvalues of their 
actions on $H_1(\Sigma_g;\mathbb{R})$ are the same. 
Therefore, in order to show that $[h_0,\rho_m]$ is not conjugate to $[h_0,\rho_n]^{\pm 1}$ 
when $m \not= n$ and $m,n>0$, we consider the set of eigenvalues of their actions on 
$H_1(\Sigma_g;\mathbb{R})$. 
Let $m_i$ (resp. $l_i$) be an element of $H_1(\Sigma_g ; \mathbb{R})$ represented by 
the oriented curve $a_i$ (resp. $b_i$) in Figure ~\ref{rotation}. 
Let $R_i$ ($i=1,2,\dots,6)$ be the subspace of $H_1(\Sigma_g;\mathbb{R})$ defined by 
\begin{align*}
R_j &= \mathbb{R}m_j \oplus \mathbb{R}l_j \ \ (j=1,2,3,4), \\
R_5 &= \mathbb{R}m_5 \oplus \mathbb{R}l_5 \oplus  \mathbb{R}m_6 \oplus \mathbb{R}l_6 \oplus 
 \mathbb{R}m_7 \oplus \mathbb{R}l_7, \\
R_6 &=  \mathbb{R}m_8 \oplus \mathbb{R}l_8 \oplus \cdots 
\oplus \mathbb{R}m_g \oplus \mathbb{R}l_g .
\end{align*} 
Then $H_1(\Sigma_g;\mathbb{R})$ is a direct sum of $R_i$'s and 
the actions of $[h_0,\rho_m]$ and $[h_0,\rho_n]^{\pm 1}$ preserve these subspaces. 
It is easy to see that the eigenvalues of the action of 
$[h_0,\rho_m]$ and $[h_0,\rho_n]^{\pm 1}$ 
on $R_i$ ($ i \not= 2$) are the same. 
On the other hand, the eigenvalues of the actions of $[h_0,\rho_m]$ on $R_2$ are 
$\displaystyle{\frac{m^2 + 2 \pm \sqrt{(m^2+2)^2 + 4}}{2}}$ and 
that of $[h_0,\rho_n]^{\pm 1}$ are 
$\displaystyle{\frac{n^2 + 2 \pm \sqrt{(n^2+2)^2 + 4}}{2}}$. 
Since the function $f(a) = a + \sqrt{a^2+4}$ is strictly increasing for $a > 0$, 
we see that the set of eigenvalues of the action of 
$[h_0,\rho_m]$ and $[h_0,\rho_n]^{\pm 1}$ on $H_1(\Sigma_g;\mathbb{R})$ are 
not the same when $m \not=n$ and $m,n>0$.  
Therefore, if $m\neq n$ and $m,n>0$, then $[h_0,\rho_m]$ is not conjugate to $[h_0,\rho_n]$ and $[h_0,\rho_n]^{-1}$, which completes the proof. 
\end{proof}

\begin{proof}[Proof of Theorem~\ref{thm:1}]
Theorem~\ref{thm:1} immediately follows from Theorem~\ref{thm:2}, Proposition~\ref{prop:3} and Lemma~\ref{commutator}. 
\end{proof}

\begin{proof}[Proof of Proposition~\ref{prop:100}]
The group $\mathrm{SL}(2,\mathbb{Z}) = \langle x,y\mid x^2y^{-3} = x^4 =1 \rangle$ is a free product of two cyclic groups $G_1 = \langle x \mid x^4=1\rangle \cong \mathbb{Z}_4$ and $G_2 = \langle y\mid y^6=1\rangle \cong \mathbb{Z}_6$ with amalgamated cyclic subgroup $G_0 = \langle x^2\mid (x^2)^2=1\rangle = \langle y^3 \mid (y^3)^2=1\rangle \cong \mathbb{Z}_2$ (see, for example, \cite{Serre}). 
Since $G_0$ is the center of both $G_1$ and $G_2$ (and hence $G_0$ is a normal subgroup of both $G_1$ and $G_2$), it is a normal subgroup of $G$. 
By Lemma~\ref{lem:1000}, every generating pair $(x_1,x_2)$ of $\mathrm{SL}(2,\mathbb{Z})$ is Nielsen equivalent to a pair $(y_1,y_2)$, which is contained in the finite set $G_1 \times G_2$. 
This finishes the proof. 
\end{proof}

\begin{rem}
Here is an alternative proof of Proposition~\ref{prop:100}, suggested by Makoto Sakuma. 
There are also only finitely many Nielsen equivalence classes on generating pairs of $PSL(2,\mathbb{Z}) = \mathbb{Z}_2 \ast \mathbb{Z}_3$ by Theorem~\ref{thm:G}. 
Let $[\overline{A},\overline{B}]$ be the image of the Nielsen equivalence class on a generating pair of $SL(2,\mathbb{Z})$ under the natural projection $SL(2,\mathbb{Z}) \to PSL(2,\mathbb{Z}) = SL(2,\mathbb{Z})/\{I, -I\}$, and hence $[\overline{A},\overline{B}]$ is the Nielsen equivalence class on a generating pair $(\overline{A},\overline{B})$. 
Since then the inverse image of $[\overline{A},\overline{B}]$ is $\{[A,B], [A,-B], [-A,B], [-A,-B]\}$, it follows from the above fact that $SL(2,\mathbb{Z})$ also has finitely many Nielsen equivalence classes. 
\end{rem}

\begin{proof}[Proof of Theorem~\ref{thm:1000}]
By Theorem 1 (Detailed Version) (1) in \cite{Mc} for $g\geq 3$, the automorphism group of $\M$ is generated by 
the maps $\phi (f) = \rho f \rho^{-1}$, where $\rho$ is the isotopy class of a possibly orientation-reversing homeomorphism of $\Sigma_g$. 
Let $[h_0,\rho_n]$ be the commutator of $h_0$ and $\rho_n$, where $(h_0,\rho_n)$ is the generating pair of $\M$ given in Theorem~\ref{thm:2}. 
Since $[\phi(h_0),\phi(\rho_n)] = [\rho h_0 \rho^{-1}, \rho \rho_n \rho^{-1}] = \rho[h_0, \rho_n] \rho^{-1}$, 
the set of eigenvalues of the action of $[\phi(h_0),\phi(\rho_n)]$ on $H_1(\Sigma_g;\mathbb{R})$ is the same as that of $[h_0, \rho_n]$. 
Therefore, $[\phi(h_0),\phi(\rho_m)]$ is not conjugate to $[\phi(h_0),\phi(\rho_n)]$  if $m\neq n$. 
This means that $(h_0,\rho_m)$ is not T-equivalent to $(h_0,\rho_n)$ if $m\neq n$, which completes the proof. 
\end{proof}

\end{document}